\def\IR{\mathbb{R}}
\def\IC{\mathbb{C}}
\def\IN{\mathbb{N}}
\def\E{\mathcal{E}}
\def\H{\mathcal{H}}
\def\a{\mathfrak{a}}
\def\b{\mathfrak{b}}
\def\lra{\longrightarrow}
\DeclareMathOperator{\supp}{supp}
\DeclareMathOperator{\sgn}{sgn}
\renewcommand{\Re}{\operatorname{Re}}
\def\loc{\mathrm{loc}}
\newcommand{\vol}{\mathrm{vol}}
\newcommand{\End}{\mathrm{End}}
\newcommand{\capty}{\operatorname{cap}}
\providecommand{\abs}[1]{\lvert#1\rvert}
\providecommand{\norm}[1]{\lVert#1\rVert}
\renewcommand{\epsilon}{\varepsilon}
\renewcommand{\phi}{\varphi}
\newtheoremstyle{Beispiel}{}{}{}{}{\bfseries}{:}{ }{}
\theoremstyle{Beispiel}
\newtheorem{example}{Example}[section]
\newtheorem*{remark}{Remark}
\newtheoremstyle{Satz}{}{}{\itshape}{}{\bfseries}{:}{ }{}
\theoremstyle{Satz}
\newtheorem{proposition}[example]{Proposition}
\newtheorem{definition}[example]{Definition}
\newtheorem{theorem}[example]{Theorem}
\newtheorem{lemma}[example]{Lemma}
\newtheorem{corollary}[example]{Corollary}
\newtheorem*{theorem*}{Theorem}
\newcommand{\Hmm}[1]{\leavevmode{\marginpar{\tiny%
$\hbox to 0mm{\hspace*{-0.5mm}$\leftarrow$\hss}%
\vcenter{\vrule depth 0.1mm height 0.1mm width \the\marginparwidth}%
\hbox to 0mm{\hss$\rightarrow$\hspace*{-0.5mm}}$\\\relax\raggedright
#1}}}
\begin{document}
\frenchspacing 

\title[Uniqueness of form extensions]{Uniqueness of form extensions and domination of semigroups}

\author[Lenz]{Daniel Lenz}
\address{D. Lenz, Mathematisches Institut\\Friedrich-Schiller-Universität Jena\\07737 Jena, Germany}
\email{daniel.lenz@uni-jena.de}

\author[Schmidt]{Marcel Schmidt}
\address{M. Schmidt, Mathematisches Institut\\Friedrich-Schiller-Universität Jena\\07737 Jena, Germany}
\email{schmidt.marcel@uni-jena.de}

\author[Wirth]{Melchior Wirth}
\address{M. Wirth, Mathematisches Institut\\Friedrich-Schiller-Universität Jena\\07737 Jena, Germany}
\address{Address at the time of publication: IST Austria, Am Campus 1, 3400 Klosterneuburg, Austria}
\email{melchior.wirth@ist.ac.at}

\date{\today}

\begin{abstract}
In this article, we study uniqueness of
form extensions in a rather general setting. The method is based on
the theory of ordered Hilbert spaces and the concept of domination
of semigroups. Our main abstract result  transfers uniqueness of
form extension of a dominating form to that of a dominated form.
This result can be applied to  a multitude of examples including
various magnetic Schr\"odinger forms on graphs and on manifolds.
\end{abstract}

\maketitle




\section*{Introduction}
In many  situations in mathematics and physics one is given a
Laplace-type operator on a domain of smooth functions and looks for
self-adjoint extensions. Existence of such self-adjoint extensions
follows from general theory of forms. Indeed, there always exists an
extension with Dirichlet boundary condition and an extension with
Neumann boundary condition. Accordingly, these are the most common
extensions. Now, it may well be that these two extensions agree and
the question whether this happens is of quite some interest.  In
this context we also  mention the even stronger  property of
essential self-adjointness, i.\,e., uniqueness of a self-adjoint
extension studied extensively  on manifolds, see e.g.
\cite{LT95,Mas99,BMS02,Sch01}, and on graphs, see e. g.
\cite{CTT11a,CTT11b,HKMW,HL15,KL12,KS03,Woj08,Woj09}.

On the structural level, the question whether the Laplacian with
Dirichlet and the Laplacian with  Neumann boundary conditions agree
is connected to an additional feature of the Laplacian, namely that it
may generate a Markov semigroup. More specifically, in many
situations all self-adjoint extensions of the Laplacian which
generate a Markov semigroup  can be shown to lie between the
Laplacian with Dirichlet boundary conditions and the Laplacian with
Neumann boundary conditions (see \cite{Fu} for open subsets of
Euclidean space, \cite{HKLW}  for locally finite graphs and
\cite{Schm16} for recent results dealing with general Dirichlet
forms). Thus, equality of these two boundary conditions then amounts
to uniqueness of a self-adjoint extension generating a Markov
process. This phenomenon is known as Markov uniqueness. Clearly, it
is of substantial interest in any operator theoretic treatment of
Markov processes and has therefore  received ample attention, see
e.g. \cite{Ebe99,KT96,Kuw02,KuShi15,RS11,Rob13,Silv74,Sim79,GM13}.

The corresponding questions can be asked for Laplacians on functions
as well as for the more involved Laplacians on bundles or with
magnetic or electric potential. In fact, recent years have seen
quite a few articles (e.g. \cite{BMS02,CTT11b,MT14,MT15}) dealing
with uniqueness questions for extensions of Laplacians with magnetic
potential or Laplacians on bundles over graphs and manifolds.

In this paper we present a new approach to studying  equality of
Dirichlet and Neumann Laplacian on bundles or with magnetic
potential. Our approach even  deals with uniqueness questions  in a
substantially  more general context. Its key element is to consider
the question of uniqueness within the framework of dominating
semigroups. The study of domination of semigroups has a long
history. A treatment of domination  in terms of a Kato inequality
for the generators of the semigroups was given  in the influential
works of Simon \cite{Sim79b} and  Hess, Schrader, Uhlenbrock \cite{HSU77}. Here,
\cite{Sim79b} deals with comparison of two semigroups acting on the
same $L^2$-space and \cite{HSU77} deals with comparison of two
semigroups acting on rather general and even possibly different
Hilbert spaces.  For semigroups acting on the same $L^2$-space a
characterization of domination  via forms was later developed by
Ouhabaz in \cite{Ouh96,Ouh99} and then extended to forms acting on
vector-valued functions by Manavi, Vogt, Voigt \cite{MVV}.  For our
considerations we rely on  the framework developed by us  in the
companion work \cite{LSW19}. This framework can be seen as an
abstraction of \cite{MVV}  to the setting of rather general Hilbert
spaces originally studied in \cite{HSU77}.

Our main abstract theorem (Theorem \ref{uniqueness_forms}) gives
stability of uniqueness under a domination condition. To the best of
our knowledge it is the very first result of its kind. It is highly
relevant as often the question of uniqueness of form extensions is
easier or already proven for the dominating form.

As a corollary we prove that, roughly speaking, Dirichlet and
Neumann Laplacian on a bundle  agree whenever the corresponding
Laplacians on the underlying space agree (Corollaries
\ref{corollary_uniqueness_forms} and  \ref{uniqueness_smooth}). This
type of result can then be applied to various examples including
magnetic Schr\"odinger operators on manifolds and on graphs (see
Sections \ref{Applications-mfld} and \ref{Applications-grph}). In
particular, we recover in a new and direct way  all known examples
for graphs and domains in Euclidean space and provide new examples
which were not treated earlier. Another application of our main
theorem to Kac regularity of domains has been given in a recent
preprint \cite{Wir17} by the third-named author.

In retrospect it is not completely surprising that domination of
semigroups plays a role in investigation of uniqueness issues.
Indeed, as already discussed above, domination of semigroups is
equivalent to the validity of a variant of Kato's inequality for the
generators  and this inequality is a key element in all previous
proofs of essential self-adjointness of Laplacians with magnetic
potential. However, even in those cases where this was shown
earlier, the actual line of reasoning is quite different from ours:
It proceeds by treating the magnetic situation by mimicking the
proof given for the Laplacian and invoking Kato' inequality. In this
sense, our paper provides a conceptual connection between
uniqueness of extensions and the theory of dominating semigroups.
Note, however, that our result does not deal with essential
self-adjointness but rather with a somewhat weaker statement that
can be  thought of as a form of Markov uniqueness (see discussion
above). In the final analysis the reason that our methods do not
give stability of essential self-adjointness may come from the fact
that domination of semigroups concerns the order structure and can
therefore only be expected to be of use in connection with
extensions respecting some form of order such as extensions that generate
Markov semigroups.

The framework of this article are symmetric forms and selfadjoint
operators as this is the situation in the applications we have in
mind. However, it is to be expected that sectorial forms and their
generators  can be treated by similar means as the
domination theory developed in \cite{MVV} works for such forms.

The article is organized  as follows: In Section
\ref{chap_positivity} we review the basics  on quadratic forms on
Hilbert spaces and domination of operators. In Section
\ref{chap_uniqueness_criterion} we prove the main theorem of this
article (Theorem \ref{uniqueness_forms}), a criterion for form
uniqueness in terms of domination. In the subsequent  two sections
we discuss the above mentioned applications, namely Schr\"odinger
forms on vector bundles on manifolds in Section \ref{Applications-mfld} and
Schr\"odinger forms on vector bundles over graphs in  Section
\ref{Applications-grph}. In Appendix~\ref{appendix} we fill a gap in the literature by characterizing a property that yields form uniqueness on graphs. 

The article has its origin in the master's thesis of one of the
authors (M.~W.).

\subsection*{Acknowledgments} M.~S. and M.~W. gratefully acknowledge
financial support of the DFG via \emph{Graduiertenkolleg: Quanten-
und Gravitationsfelder}. M.~W. gratefully acknowledges financial
support of the \emph{Studienstiftung des deutschen Volkes}. D.~L.
gratefully acknowledges partial support by the DFG as well as
enlightening discussions with Peter Stollmann on Dirichlet forms and
domination of semigroups. M.~W. would like to thank Ognjen Milatovic
for several helpful remarks on a preliminary version of this article
and Jun Masamune for fruitful discussions about possible
applications of the main result.

\section{Quadratic forms and domination of semigroups}\label{chap_positivity}
Our considerations are cast in the framework of semigroups
associated to lower boun\-ded forms.  The main abstract result of
this article (presented  in the next section) gives stability of
form uniqueness under domination. Here domination is meant in the
sense of \cite{LSW19} (see also the discussion below) and  form uniqueness will be phrased via
suitable cores of the form. Specifically, these cores will have an
ideal property. A crucial role is played by forms on $L^2$-spaces
satisfying the first Beurling-Deny criterion.  The necessary
background and notation is discussed in this section. If not noted
otherwise the material can be found in standard textbooks such as
\cite{Wei,RS}.

\medskip

Let $H$ be a complex Hilbert space. A quadratic form $q\colon D(q)\lra \IR$
defined on a subspace $D(q)$ of $H$ induces a sesquilinear form
\begin{align*}
D(q)\times D(q)\lra \IC,\,(f,g)\mapsto \frac 1 4\sum_{k=1}^4 i^k q(f+i^k g)
\end{align*}
We will not distinguish between a quadratic form and the induced
sesquilinear form and write $q$ for both of them. Moreover, all quadratic forms in this article are assumed to be densely defined.

The form $q$ is said to be \emph{lower bounded} if there exists
$\lambda\in \IR$ such that $q(f)\geq -\lambda\norm{f}_H^2$ for all
$f\in D$. In this case the form $q_\mu := q + \mu \langle
\cdot,\cdot \rangle$ is an inner product on $D(q)$ and the norms
associated to $q_{\mu}$, $q_{\mu'}$ are equivalent for
$\mu,\mu'>\lambda$. If the choice of $\mu$ does not matter, we will
write $\langle \cdot,\cdot\rangle_q$ for $q_\mu$ and
$\norm{\cdot}_q$ for the associated  norm and call it the \emph{form
norm} of $q$. A lower bounded form $q$ is \emph{closed} if $(D(q), \langle
\cdot,\cdot\rangle_q)$ is complete (and, hence, a Hilbert space). A dense subspace of $(D(q),\norm\cdot_q)$ is also called a \emph{core} for $q$.

For every closed form $q$ there exists a unique self-adjoint
operator $T$ with $D(T) \subset D(q)$ and   $\langle T
f,g\rangle=q(f,g)$ for all $f\in D(T)$, $g\in D(T)$. The domain of
$T$ is then dense in $D(q)$ with respect to $\norm{\cdot}_q$. The
operator $T$ is called the \textit{generator} of $q$ (note that this
convention makes the generator a \emph{lower} semibounded operator).

Forms on $L^2$-spaces and, in particular,  those well compatible
with its lattice structure will be of main relevance for our study.
These are discussed next. Let $(X,\mathcal{B},\mu)$ be a measure
space. A form $q$ on $L^2(X,\mu)$ is \emph{real} if $f\in D(q)$
implies $\bar f\in D(q)$ and $q(\bar f)=q(f)$. Of course this
condition is void when working over $\IR$. For real functions $f,g$
we define
$$f\wedge g := \min\{f,g\}.$$   A real form $q$ satisfies the \emph{first Beurling-Deny criterion}
if $f\in D(q)$ implies $\abs{f}\in D(q)$ and $q(\abs{f})\leq q(f)$.
In this case also $q(f^+)\leq q(f)$ for any real valued $f$, where
$f^+$ denotes the positive part of $f$ i.e. $f^+:=\max\{f,0\}$. The
importance of the first Beurling-Deny criterion comes from the fact
that a  form $q$ with generator $T$ satisfies the first Beurling-Deny criterion if and only if the associated semigroup $(e^{-t T})_{t\geq 0}$
is \textit{positivity preserving}, that is, $e^{-t T} f\geq 0$
for all $t\geq 0$ and  $f\geq 0$.

Domination will provide the main condition for our stability result.
The relevant aspects of domination  will be  discussed next (see
\cite{HSU77,MVV,Ouh96,Ouh99,Sim79b} as well for further discussion).
The basic idea is to 'estimate'  a form or semigroup on a Hilbert
space $\H$ by a form or semigroup on an $L^2$-space. In order to
achieve this comparison one needs a map from $\H$ to the $L^2$-space
mimicking the modulus of a complex number and allowing for a version
of the polar decomposition of a complex number as well. Following
\cite{HSU77} such maps  will be discussed next.

Let $(X,\mathcal{B},\mu)$ be a measure space and $\H$ a Hilbert
space. We denote by $L^2_+(X,\mu)$ the space all nonnegative
functions in $L^2(X,\mu)$ and, more generally, whenever  $V\subset
L^2(X,\mu)$ is a subspace, we write $V_+$ for $V\cap L^2_+(X,\mu)$.

A map $\abs{\cdot}\colon \H\lra L^2_+(X,\mu)$ is called
\emph{absolute map} if
\begin{align*}
\abs{\langle \xi,\eta\rangle}\leq \langle\abs{\xi},\abs{\eta}\rangle
\end{align*}
for all $\xi,\eta\in \H$ with equality if $\xi=\eta$. So, in
particular, $|\xi| =0$ implies $\xi =0$.  The elements $\xi$ and
$\eta$ of $\H$ are called \emph{paired} if
\begin{align*}
\langle \xi,\eta\rangle=\langle\abs{\xi},\abs{\eta}\rangle.
\end{align*}
 If for all $\xi\in
\H$ and $f\in L^2_+(X,\mu)$ there exists $\eta\in \H$ such that
$\abs{\eta}=f$ and $\xi$ and $\eta$ are paired the absolute map is
called \emph{absolute pairing}.

Clearly, absolute maps are generalizations of the modulus function.
Similarly, an absolute pairing can be thought of as giving not only
a modulus function but also a polar decomposition. More
specifically, whenever  $\xi$ and $\eta$ are paired one may think
about $\eta$ as being of the form $ \abs{\eta} \sgn \xi$. In fact,
the  reader may just bear the following examples in mind, where such
an interpretation is rather immediate.

\begin{example}[Direct integral] \label{direct-integral}
Let $((H_x)_{x\in X},\mathfrak{M})$ be a measurable field of Hilbert
spaces over $(X,\mathcal{B},\mu)$ in the sense of \cite{Tak02},
Definition IV.8.9, and $\H=\int_X^\oplus H_x\,d\mu(x)$. The norm on
$H_x$ is denoted by $\abs{\cdot}_x$, $x\in X$.
 Then, the
map $\abs{\cdot}\colon \H\lra L^2_+(X,\mu)$ given by
$\abs{\xi}(x)=\abs{\xi(x)}_x$ is an absolute pairing. Indeed, for
$\xi\in\H$ and $f\in L^2_+(X,\mu)$ let
\begin{align*}
\eta(x)=\begin{cases}\frac{ f(x)}{\abs{\xi(x)}_x}  \xi(x) &\colon \xi(x)\neq 0,\\
f(x)\zeta(x)&\colon \xi(x)=0,\end{cases}
\end{align*}
where $\zeta\in\mathfrak{M}$ with $\abs{\zeta}=1$ (the existence of
such an element is proven in \cite{Tak02}, Lemma IV.8.12). Then
$\eta$ and $\xi$ are paired with $\abs{\eta}=f$. The other
properties of an absolute pairing are easy to check.

A special case of this construction is given by a constant field of
Hilbert spaces, where all  $H_x$, $x\in X$,  are equal to one fixed
separable Hilbert space $H$. In this case, $\H$ is also denoted by
$L^2 (X,\mu;H)$ and given by the vector space of all measurable
maps (with respect to the corresponding Borel-$\sigma$-algebras)
$\xi : X\longrightarrow H$ with $\int \abs{\xi(x)}_x^2 d\mu (x)
<\infty$, where two such maps are identified if they agree
$\mu$-almost everywhere.
\end{example}

\smallskip

For the applications discussed in the later part of this article, we
will  restrict attention to  the  special instance of the previous
example given by  Hermitian vector bundles. There the Hilbert spaces
in question are finite dimensional and form a continuous field. For
definiteness reasons we provide an explicit discussion next.

\begin{example}[Hermitian bundle] \label{hermitian-bundle}
A Hermitian vector bundle is given by  topological spaces $E$ and
$X$ and a continuous surjective map $\pi : E\longrightarrow X$
 with the property that  each fiber $\pi^{-1} (x)$, $x\in X$, carries the structure of a
finite-dimensional complex vector space with an inner product $\langle
\cdot,\cdot\rangle_x$  and  that there is a  finite-dimensional complex
Hilbert space $(H,\langle\cdot,\cdot\rangle)$ such that to each
point $p\in X$ there exists a neighborhood $U$ and a homeomorphism
$\varphi : U\times H\longrightarrow \pi^{-1} (U)$, called local
trivialization, such that $\varphi_x :=\varphi (x,\cdot)$ is an an
isometric isomorphism between the inner product space $H$ and
$\pi^{-1}(x)$ for each $x\in U$. We will  denote such  a bundle  by
$\pi\colon E\lra X$ and  say that $E$ is a bundle over $X$ (with
projection $\pi$). We subsequently also often  skip the $\pi$ and
the $X$ in the notation and  just  call $E$ the bundle. (For further
details we refer to \cite{MS74}, where the real-valued version, \textit{Euclidean
vector bundles}, are discussed.)

Let now $\pi \colon E\lra X$ be a Hermitian vector bundle,  $\mu$ a
Borel measure on $X$ and assume that $X$ is  a Lindelöf space, that
is  a topological space such that every open cover has a countable
subcover. Examples of Lindelöf spaces include $\sigma$-compact
spaces and separable metric spaces.  A function $\xi :
X\longrightarrow E$ with $\pi \circ \xi = \mathrm{id}_X$ is called a
section. By $L^2 (X,\mu;E)$ we denote the set of measurable (with
respect to the corresponding Borel-$\sigma$-algebras) sections $\xi$
with $\int_X \abs{\xi(x)}_x^2   dm (x) < \infty$, where
$\abs{\cdot}_x$ is the norm induced from $\langle
\cdot,\cdot\rangle_x$ and sections are identified which agree
$\mu$-almost everywhere. This space is a Hilbert space and
\begin{align*}
\abs\cdot\colon L^2(X,\mu;E)\lra L^2_+(X,\mu),\,\abs{\xi}(x)=\abs{\xi(x)}_x
\end{align*}
is an absolute pairing.

Indeed, this is just a special case of the
previous example with constant field of Hilbert spaces. More precisely, by the Lindelöf property there is a countable family of local trivializations $\phi_k\colon U_k\times H\lra \pi^{-1}(U_k)$ such that $X=\bigcup_k U_k$. Setting $V_k=U_k\setminus\bigcup_{j=1}^{k-1} U_j$, one obtains
\begin{align*}
\psi\colon X\times H\lra E,\,\psi|_{V_k}=\phi_k|_{V_k}
\end{align*}
which is a measurable bijection with measurable inverse. By definition, the induced map
\begin{align*}
\Psi\colon L^2(X,\mu;H)\lra L^2(X,\mu;E),\,\Psi(\xi)(x)=\psi(x,\xi(x))
\end{align*}
is an isometric isomorphism and $\abs{\Psi(\xi)}=\abs{\xi}$ for all $\xi\in L^2(X,\mu;H)$.
\end{example}

Let now an absolute pairing  $\abs{\cdot}\colon \H\lra L^2_+(X,\mu)$
be given. Then, the subspace $U$ of $\H$ is
 a \emph{generalized ideal} of the subspace  $V$  of $L^2 (X,\mu)$
 if for all
$\xi\in U$ we have $\abs{\xi}\in V$ and  for any $f\in V$ with
$0\leq f\leq \abs{\xi}$ there exists an $\eta\in U$ such that
$\abs{\eta} = f$ and $\xi$ and $\eta$ are paired.

Let $\a$ be a closed form on $\H$ and  $\b$ a closed form on
$L^2(X,\mu)$. We say that \emph{$\a$ is dominated by $\b$} if
$D(\a)$ is a generalized ideal of $D(\b)$ and
\begin{align*}
\Re \a(\xi,\eta)\geq \b(\abs{\xi},\abs{\eta})
\end{align*}
whenever $\xi,\eta\in D(\a)$ are paired. In the situations we have
in mind the dominating form $\b$ will  additionally satisfy the
first Beurling-Deny criterion.  The relevance of domination comes
from the following fundamental result.

\smallskip

\begin{theorem*}
Let $A$, $B$ be the generators of the forms $\a$ and $\b$
respectively and assume that $\b$ satisfies the first Beurling-Deny
criterion. Then, the form $\a$ is dominated by $\b$ if and only if
the semigroup $(e^{-tA})_{t\geq 0}$ is dominated by
$(e^{-tB})_{t\geq 0}$ in the sense that
\begin{align*}
\abs{e^{-tA}\xi}\leq e^{-tB}\abs{\xi}
\end{align*}
for all $\xi\in \H$ and all $t\geq 0$.
\end{theorem*}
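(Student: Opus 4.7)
The plan is to establish both implications at the resolvent level, adapting the classical Ouhabaz strategy to the absolute-pairing setting of \cite{HSU77, LSW17}. Both directions exploit the interaction between the absolute-map inequality $\abs{\langle\xi,\eta\rangle} \leq \langle\abs\xi,\abs\eta\rangle$, the pairing identity with equality, and the first Beurling--Deny criterion for $\b$ (which makes $e^{-tB}$ positivity preserving and $D(\b)$ a lattice under finite minima and positive parts).

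For the direction that semigroup domination implies form domination, I would use the monotone approximation $\a(\xi) = \sup_{t>0} \tfrac{1}{t}\langle (I - e^{-tA})\xi, \xi\rangle$ which recovers the closed form from its semigroup. For paired elements $\xi, \eta \in D(\a)$, the pairing identity $\langle\xi,\eta\rangle = \langle\abs\xi,\abs\eta\rangle$, combined with $\abs{\langle e^{-tA}\xi,\eta\rangle} \leq \langle\abs{e^{-tA}\xi},\abs\eta\rangle \leq \langle e^{-tB}\abs\xi,\abs\eta\rangle$, yields pointwise in $t$ the inequality $\Re\a_t(\xi,\eta) \geq \b_t(\abs\xi,\abs\eta)$ for the difference-quotient forms. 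Specializing $\eta = \xi$ bounds $\b_t(\abs\xi)$ by $\Re\a(\xi)$ uniformly in $t$, so $\abs\xi \in D(\b)$; the polarized limit $t \downto 0$ then delivers $\Re\a(\xi,\eta) \geq \b(\abs\xi,\abs\eta)$. The generalized ideal property must still be checked: given $\xi \in D(\a)$ and $0 \leq f \leq \abs\xi$ with $f \in D(\b)$, the absolute-pairing axiom produces an $\eta \in \H$ with $\abs\eta = f$ paired with $\xi$, and membership $\eta \in D(\a)$ must be extracted by controlling $\tfrac{1}{t}\langle (I - e^{-tA})\eta,\eta\rangle$ against the corresponding quantity for $f$ under $e^{-tB}$.

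For the converse, fix $\lambda$ above the lower bounds of both forms, pick $\xi \in \H$, and set $u := (\lambda + A)^{-1}\xi$, $v := (\lambda + B)^{-1}\abs\xi$. I would aim to show $\abs u \leq v$. Since $D(\b)$ is a lattice under the first Beurling--Deny criterion, $w := v \wedge \abs u$ belongs to $D(\b)$, and the generalized ideal property yields $\tilde u \in D(\a)$ with $\abs{\tilde u} = w$ and $\tilde u$ paired with $u$. Inserting $u - \tilde u \in D(\a)$ and $v - w \in D(\b)$ as test elements into the variational identities $\a(u,\cdot) + \lambda\langle u,\cdot\rangle = \langle\xi,\cdot\rangle$ and $\b(v,\cdot) + \lambda\langle\abs\xi,\cdot\rangle = \langle\abs\xi,\cdot\rangle$, and applying form domination $\Re\a(\tilde u,u) \geq \b(w,\abs u)$ on the paired pair together with the standard first Beurling--Deny consequence $\b(v, v - w) \geq \b(w, v - w)$ (valid because $v - w = (v - \abs u)^+$), one obtains after rearrangement a nonpositive upper bound for $\lambda\norm{(\abs u - v)^+}^2$, forcing $\abs u \leq v$. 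Resolvent domination then upgrades to semigroup domination via the exponential formula $e^{-tA}\xi = \lim_n (I + \tfrac{t}{n}A)^{-n}\xi$ and continuity of the absolute map in the strong topology (which follows from $\norm{\,\abs\xi - \abs\eta\,}_{L^2} \leq \norm{\xi - \eta}_\H$, itself a consequence of the absolute-map axioms).

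The main obstacle in both directions is ensuring that the auxiliary elements produced by the absolute-pairing axiom ($\tilde u$ in the forward direction, $\eta$ in the backward direction) actually lie in $D(\a)$ rather than merely in $\H$. The generalized ideal property is designed precisely for this, but its compatibility with the semigroup-side hypothesis in the (\textit{$\Leftarrow$}) direction is the subtle point; once available, the rest is algebraic manipulation of variational identities in the spirit of \cite{Ouh96, MVV}.
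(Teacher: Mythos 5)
The paper itself does not prove this theorem; it quotes it from \cite{Ouh96,Ouh99,MVV,LSW17}, so your attempt has to be measured against the standard proofs in those references. Your backward direction (form domination $\Rightarrow$ resolvent domination $\Rightarrow$ semigroup domination via the exponential formula and $\norm{\abs{\xi}-\abs{\eta}}\leq\norm{\xi-\eta}$) is essentially the classical argument and is sound in outline, up to one misstatement: the inequality $\b(v,v-w)\geq\b(w,v-w)$ you invoke is equivalent to $\b\bigl((v-\abs{u})^+\bigr)\geq 0$, which can fail for a merely lower bounded form. The consequence of the first Beurling--Deny criterion that actually does the work is $\b(g^+,g^-)\leq 0$ (expand $\b(\abs{g})\leq\b(g)$). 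The cleaner route is to set $h=(\abs{u}-v)^+\leq\abs{u}$, lift it by the generalized ideal property to $\hat u\in D(\a)$ with $\abs{\hat u}=h$ paired with $u$, test the two resolvent identities with $\hat u$ and $h$ respectively, and subtract; using domination and $\abs{\langle\xi,\hat u\rangle}\leq\langle\abs{\xi},h\rangle$ this gives $\b(h)+\lambda\norm{h}^2\leq\b\bigl((v-\abs{u})^+,(\abs{u}-v)^+\bigr)\leq 0$, forcing $h=0$ once $\lambda$ is strictly above the lower bound of $\b$.

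The genuine gap is in the forward direction, at exactly the point you flag as subtle: showing that semigroup domination forces $D(\a)$ to be a generalized ideal of $D(\b)$. Controlling $\tfrac1t\langle(I-e^{-tA})\eta,\eta\rangle$ ``against the corresponding quantity for $f$'' cannot work as stated, because semigroup domination only yields $\Re\langle e^{-tA}\eta,\eta\rangle\leq\langle\abs{e^{-tA}\eta},\abs{\eta}\rangle\leq\langle e^{-tB}f,f\rangle$, i.e.\ a \emph{lower} bound $\a_t(\eta)\geq\b_t(f)$ on the difference quotients, whereas membership $\eta\in D(\a)$ requires the \emph{upper} bound $\sup_t\a_t(\eta)<\infty$. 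The known resolution is not a direct estimate but Ouhabaz's invariance criterion for closed convex sets: semigroup domination is equivalent to invariance of $\mathcal{C}=\{(\zeta,g)\in\H\oplus L^2(X,\mu):\abs{\zeta}\leq g\}$ under $e^{-tA}\oplus e^{-tB}$; one must identify the metric projection $P$ onto $\mathcal{C}$ (this is precisely where the absolute pairing axioms are consumed), after which invariance gives $\Re\,\mathfrak{c}_t(Pu,u-Pu)\geq 0$ for the difference-quotient form of $\mathfrak{c}=\a\oplus\b$, hence $\mathfrak{c}_t(Pu)\leq\Re\,\mathfrak{c}_t(Pu,u)\leq\mathfrak{c}_t(Pu)^{1/2}\mathfrak{c}_t(u)^{1/2}$ and so $\mathfrak{c}_t(Pu)\leq\mathfrak{c}_t(u)$ uniformly in $t$ --- this Cauchy--Schwarz step supplies the missing upper bound. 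Your remaining steps in this direction (the inequality $\Re\a(\xi,\eta)\geq\b(\abs{\xi},\abs{\eta})$ for paired elements and the membership $\abs{\xi}\in D(\b)$) are correct.
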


\smallskip

 This result has quite some history: For $\H =
L^2 (X,\mu)$ is was shown in \cite{Ouh96,Ouh99}. This was then
extended to  $\H=L^2(X,\mu;H)$ for a Hilbert space $H$ in \cite{MVV}
(where even sectorial forms are treated). The general case stated
here (and actually an even more general case) is given in
\cite{LSW19}. For the applications in the later part of the article
the result of \cite{MVV} suffices.

%
%
%

As discussed already, our aim is to study uniqueness of form
extensions. So, we will be interested in cores of quadratic forms. These cores
will need to have a special structure which we introduce next. Let
$\abs{\cdot}\colon \H\lra L^2_+(X,\mu)$ be an absolute pairing and
$U$, $V$ subspaces of $\H$. The space $U$ is called an \emph{ideal}
of $V$ if $f\in U$, $g\in V$ and $\abs{g}\leq \abs{f}$ implies $g\in
U$. If $U,V\subset L^2(X,\mu)$, this is meant with respect to the
absolute pairing given by the pointwise modulus. So, in this case
our notion of ideals coincides with the usual definition in the
theory of Banach lattices.

\begin{remark} The concepts of ideal and  of generalized ideal
certainly have the same flavor. So, it is worth noting that  there
is no general relation between these two concepts.  Indeed, they
arise in rather  different situations. Ideals come about as
subspaces of the same Hilbert space, whereas the notion of generalized ideals applies to subspaces of two different Hilbert spaces, which are linked by an absolute pairing. So, in this respect the terminology is somewhat
unsatisfactory. Still, we stick with it, as it seems to be the
standard notation used in the field. Connections between ideals and generalized ideals in the case $\H=L^2(X,\mu)$ are studied in \cite{MVV}, Proposition 3.6 and Corollary 3.7.
\end{remark}

\section{A criterion for form uniqueness}\label{chap_uniqueness_criterion}
In this section we present the main theorem of this article, which
allows one to transfer form uniqueness of a dominating form to that
of the dominated form.

\medskip

We begin with two technical lemmas that may be of interest in other
situations as well. The first lemma shows that the form norm is
compatible with taking minima.

\begin{lemma}\label{pos_form_lattice}
Let $(X,\mathcal{B},\mu)$ be a measure space and $q$ be a lower bounded quadratic form on $L^2(X,\mu)$ satisfying the first Beurling-Deny
criterion. If $f,g\in D(q)$ are real-valued, then $f\wedge g\in D(q)$ and
\begin{align*}
\norm{f\wedge g}_q^2\leq \norm{f}_q^2+\norm{g}_q^2.
\end{align*}
\end{lemma}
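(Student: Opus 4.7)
My plan is to reduce everything to the two algebraic identities
$$f\wedge g=\tfrac{1}{2}(f+g)-\tfrac{1}{2}\abs{f-g},\qquad f\vee g=\tfrac{1}{2}(f+g)+\tfrac{1}{2}\abs{f-g},$$
together with the parallelogram identity, which any sesquilinear form satisfies. Since $f-g\in D(q)$ and the first Beurling-Deny criterion gives $\abs{f-g}\in D(q)$, the first identity immediately shows $f\wedge g\in D(q)$, and the second shows $f\vee g\in D(q)$ as well. So membership is the easy half.

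For the norm estimate I would prove the stronger \emph{parallelogram-type} inequality
$$q(f\wedge g)+q(f\vee g)\leq q(f)+q(g).$$
Applying the parallelogram identity to the decompositions above gives
$$q(f\wedge g)+q(f\vee g)=2q\bigl(\tfrac{1}{2}(f+g)\bigr)+2q\bigl(\tfrac{1}{2}\abs{f-g}\bigr)=\tfrac{1}{2}q(f+g)+\tfrac{1}{2}q(\abs{f-g}).$$
Now the first Beurling-Deny criterion yields $q(\abs{f-g})\leq q(f-g)$, so the right-hand side is bounded by $\tfrac{1}{2}q(f+g)+\tfrac{1}{2}q(f-g)$, which by another application of the parallelogram identity equals $q(f)+q(g)$.

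In parallel, the pointwise identity $(f\wedge g)^2+(f\vee g)^2=f^2+g^2$ (which follows from $(f\wedge g)+(f\vee g)=f+g$ and $(f\wedge g)(f\vee g)=fg$) gives the $L^2$-analogue $\norm{f\wedge g}_H^2+\norm{f\vee g}_H^2=\norm{f}_H^2+\norm{g}_H^2$. Choosing $\mu>\lambda$ so that $q_\mu$ is a genuine inner product, adding the two displays produces
$$\norm{f\wedge g}_q^2+\norm{f\vee g}_q^2\leq \norm{f}_q^2+\norm{g}_q^2,$$
and dropping the nonnegative term $\norm{f\vee g}_q^2$ yields the claim.

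There is no real obstacle; the only point to keep in mind is that dropping $\norm{f\vee g}_q^2$ on the left requires this term to be nonnegative, which is why one has to work with the shifted form $q_\mu$ rather than with $q$ itself. All other steps are formal manipulations that use only sesquilinearity and the one hypothesis $q(\abs{h})\leq q(h)$ supplied by the first Beurling-Deny criterion.
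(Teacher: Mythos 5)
Your proof is correct and follows essentially the same route as the paper: both rest on the decomposition $f\wedge g=\tfrac12(f+g-\abs{f-g})$, the Beurling--Deny bound $q(\abs{f-g})\leq q(f-g)$, and the parallelogram law (the paper's use of Young's inequality is just the parallelogram identity with the $f\vee g$ term discarded, which is exactly your final step). Your bookkeeping is slightly sharper in that it records the intermediate submodularity inequality $\norm{f\wedge g}_q^2+\norm{f\vee g}_q^2\leq\norm{f}_q^2+\norm{g}_q^2$, but nothing essential differs.
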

\begin{proof}
Since $\norm{\abs{f}}_{L^2}=\norm{f}_{L^2}$, the square of the form
norm $\norm{\cdot}_q^2=q+\mu\norm{\cdot}_{L^2}^2$ also satisfies the
first Beurling-Deny criterion. Also note that $f\wedge g=\frac 1
2(f+g-\abs{f-g})\in D(q)$. Combining these two facts with Young's
inequality, we obtain
\begin{align*}
\norm{f\wedge g}_q^2&=\frac 14\norm{f+g-\abs{f-g}}_q^2\\
&\leq\frac 1 2(\norm{f+g}_q^2+\norm{\abs{f-g}}_q^2)\\
&\leq \frac 1 2(\norm{f+g}_q^2+\norm{f-g}_q^2)\\
&=\norm{f}_q^2+\norm{g}_q^2.\qedhere
\end{align*}
\end{proof}

The next lemma gives an approximation result.

\begin{lemma}\label{approx_pos}
Let $(X,\mathcal{B},\mu)$ be a measure space, $q$ a closed form on $L^2(X,\mu)$ satisfying the first Beurling-Deny
criterion, and $D_q\subset D(q)$ a dense ideal. If $v\in D(q)_+$,
then there is a sequence $(v_n)$ in $D_q$ such that $0\leq v_n\leq
v$ and $v_n\to v$ with respect to $\norm{\cdot}_q$ and almost everywhere.
\end{lemma}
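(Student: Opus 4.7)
The plan is to combine a pointwise lattice truncation of a general form-norm approximation with Mazur's theorem, to pass from weak to strong form-norm convergence while preserving the pointwise two-sided bound. The point is that a direct construction like $v_n = w_n^+ \wedge v$ automatically yields the pointwise bound and $L^2$-convergence, but the first Beurling--Deny criterion only gives \emph{inequalities} for the lattice operations and so does not yield form-norm convergence of this truncation; Mazur's theorem repairs this.

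First I would use density of $D_q$ in $(D(q),\norm{\cdot}_q)$ to choose $w_n\in D_q$ with $\norm{w_n-v}_q\to 0$. Then I would set
\begin{equation*}
v_n := \abs{w_n}\wedge v.
\end{equation*}
By construction $0\leq v_n\leq v$. The first Beurling--Deny criterion gives $\abs{w_n}\in D(q)$ with $\norm{\abs{w_n}}_q\leq \norm{w_n}_q$, and Lemma \ref{pos_form_lattice} applied to the real nonnegative functions $\abs{w_n}$ and $v$ yields $v_n\in D(q)$ together with
\begin{equation*}
\norm{v_n}_q^2 \leq \norm{\abs{w_n}}_q^2 + \norm{v}_q^2 \leq \norm{w_n}_q^2+\norm{v}_q^2,
\end{equation*}
so $(v_n)$ is bounded in $D(q)$. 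Since $v_n\leq \abs{w_n}$ in $L^2$ and $w_n\in D_q$, the ideal property of $D_q$ forces $v_n\in D_q$. Moreover the pointwise inequalities $\abs{\,\abs{w_n}-v\,}\leq \abs{w_n-v}$ and $\abs{s\wedge v(x)-t\wedge v(x)}\leq\abs{s-t}$ together with $v=v\wedge v$ give $\norm{v_n-v}_{L^2}\leq \norm{w_n-v}_{L^2}\to 0$.

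To upgrade to form-norm convergence I would extract a subsequence $v_{n_k}$ converging weakly in the Hilbert space $(D(q),\langle\cdot,\cdot\rangle_q)$; the weak limit must coincide with the $L^2$-limit $v$. By Mazur's theorem there exist finite convex combinations
\begin{equation*}
\tilde v_\ell = \sum_j c_{\ell,j}\, v_{n_j},\qquad c_{\ell,j}\geq 0,\ \sum_j c_{\ell,j}=1,
\end{equation*}
with $\norm{\tilde v_\ell-v}_q\to 0$. Each $\tilde v_\ell$ lies in $D_q$ because $D_q$ is a linear subspace, and $0\leq \tilde v_\ell\leq v$ because convex combinations preserve the two-sided pointwise bound. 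Relabelling $(\tilde v_\ell)$ as $(v_n)$ gives the sequence required by the lemma.

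The main obstacle, as flagged above, is the absence of a normal-contraction / Markovian property for $q$: under only the first Beurling--Deny criterion one cannot conclude directly that $\abs{w_n}\wedge v\to v$ in form norm, only that this sequence is form-norm bounded. The Mazur argument is precisely what bypasses this defect without requiring anything beyond what the hypotheses and Lemma \ref{pos_form_lattice} already deliver.
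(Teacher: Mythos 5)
Your proof is correct and follows essentially the same route as the paper: truncate a form-norm approximating sequence by a lattice operation, use Lemma \ref{pos_form_lattice} and the ideal property to keep the truncations in $D_q$ with bounded form norm, and then pass to strongly convergent convex combinations. The only cosmetic differences are that the paper truncates via $\tilde v_n^+\wedge v$ rather than $\abs{w_n}\wedge v$ and invokes the Banach--Saks theorem (Ces\`aro means of a subsequence converge in form norm, with the limit identified by a.e.\ convergence) where you use weak compactness plus Mazur's theorem with the limit identified via $L^2$-convergence; these are interchangeable devices here.
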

\begin{proof}
Let $v\in D(q)_+$. Since $D_q\subset D(q)$ is dense, there is a
sequence $\tilde v_n$ in $D_q$ such that
$$
\norm{\tilde v_n-v}_q\to
0
$$
and $\tilde v_n \to v$ pointwise a.e. Since $D_q$ is closed under taking real parts, we may assume that the functions $\tilde v_n$ are real-valued.

Lemma   \ref{pos_form_lattice} ensures $\tilde v_n^+\wedge v\in
D(q)$ and
\begin{align*}
\norm{\tilde v_n^+\wedge v}_q^2\leq \norm{\tilde v_n^+}_q^2+\norm{v}_q^2\leq \norm{\tilde v_n}_q^2+\norm{v}_q^2.
\end{align*}
From the inequality
$$0\leq \tilde v_n^+\wedge v\leq \tilde v_n^+\leq \abs{\tilde v_n}$$
it follows that $\tilde v_n^+\wedge v$ belongs to $ D_q$ since
$\tilde v_n$ belongs to $D_q$,  $\tilde v_n^+\wedge v$ belongs to
$D(q)$ and $D_q\subset D(q)$ is an ideal.

Since $(\tilde v_n)$ is convergent in
$(D(q),\langle\cdot,\cdot\rangle_q)$, it is in particular bounded,
and the above  inequality shows that $(\tilde v_n^+\wedge v)$ is
bounded as well. By the Banach-Saks Theorem (cf. \cite{CF}, Theorem~A.4.1) there is a subsequence $(\tilde v_{n_k})$ and an element
$\tilde v\in D(q)$ such that
\begin{align*}
v_N:=\frac 1 N\sum_{k=1}^N \tilde v_{n_k}^+\wedge v\overset{\norm\cdot_q}{\to}\tilde v,\,N\to\infty.
\end{align*}
Obviously, $v_N\in D_q$ and $0\leq v_N\leq v$ for all $N\in \IN$. By the pointwise a.e. convergence of $\tilde v_n$ to $v$ also the Ces\`{a}ro means
$v_N$ converge to $v$ a.e. Hence $\tilde v=v$.
\end{proof}

Given the previous lemma we can now rather easily prove   the main
abstract result of the article. If $\abs{\cdot}\colon \mathcal{H}\lra L^2_+(X,\mu)$ is an absolute pairing and $V\subset \mathcal{H}$ a subspace, we write $\abs{V}$ for $\{\abs{v}: v\in V\}$.

\begin{theorem}\label{uniqueness_forms}
Let $\H$ be a Hilbert space, $(X,\mathcal{B},\mu)$ a measure space,
and $\abs{\cdot}\colon \H\lra L^2_+(X,\mu)$ an absolute pairing. Let
$\a$ be a closed form in $\H$, $\b$ a closed form in $L^2(X,\mu)$ satisfying
the first Beurling-Deny criterion, and $D_\a\subset D(\a), D_{\b}\subset D(\b)$
ideals such that the following conditions hold:
\begin{itemize}
\item $\a$ is dominated by $\b$
\item $D_\b^+\cap \abs{D(\a)}\subset \abs{D_\a}$
\end{itemize}
If $D_\b$ is a core for $\b$, then $D_\a$ is a core for $\a$.
\end{theorem}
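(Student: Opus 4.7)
Take $\xi\in D(\a)$; the task is to produce a sequence in $D_\a$ converging to $\xi$ in $\|\cdot\|_\a$.

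Since $\a$ is dominated by $\b$, the generalized ideal property yields $|\xi|\in D(\b)$. The assumed form core $D_\b$ is a dense ideal of $(D(\b),\|\cdot\|_\b)$, so Lemma \ref{approx_pos} applied to $v=|\xi|$ furnishes $v_n\in D_\b$ with $0\leq v_n\leq|\xi|$ and $v_n\to|\xi|$ in $\|\cdot\|_\b$. For each $n$, the generalized ideal property of $D(\a)$ in $D(\b)$ produces $\xi_n\in D(\a)$ with $|\xi_n|=v_n$ and $\xi_n,\xi$ paired. Because $v_n=|\xi_n|\in D_\b^+\cap|D(\a)|$, the hypothesis $D_\b^+\cap|D(\a)|\subseteq|D_\a|$ provides $\eta_n\in D_\a$ with $|\eta_n|=v_n=|\xi_n|$, and since $D_\a$ is an ideal of $D(\a)$ with $|\xi_n|\leq|\eta_n|$, we conclude $\xi_n\in D_\a$. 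From the pairing of $\xi_n$ and $\xi$ one immediately obtains $\H$-convergence:
\begin{align*}
\|\xi_n-\xi\|_\H^2 = \|v_n\|_{L^2}^2 - 2\langle v_n,|\xi|\rangle + \||\xi|\|_{L^2}^2 = \|v_n-|\xi|\|_{L^2}^2 \to 0.
\end{align*}

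The remaining task is to upgrade $\H$-convergence to $\|\cdot\|_\a$-convergence. The plan is to show the $\limsup$ estimate $\limsup_n\tilde\a(\xi_n)\leq\tilde\a(\xi)$, where $\tilde\a=\a+\mu\langle\cdot,\cdot\rangle$ is a positive shift. Together with $\H$-convergence this bound gives relative weak compactness of $(\xi_n)$ in $(D(\a),\|\cdot\|_\a)$ and forces the weak limit to be $\xi$; weak lower semicontinuity $\tilde\a(\xi)\leq\liminf_n\tilde\a(\xi_n)$ then yields $\tilde\a(\xi_n)\to\tilde\a(\xi)$, and the Hilbert-space geometry of $(D(\a),\|\cdot\|_\a)$ promotes weak plus norm convergence to strong convergence $\xi_n\to\xi$ in $\|\cdot\|_\a$, placing $\xi$ in the $\|\cdot\|_\a$-closure of $D_\a$ as required.

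The core difficulty is the $\limsup$ estimate, since the domination inequality $\Re\tilde\a(\cdot,\cdot)\geq\tilde\b(|\cdot|,|\cdot|)$ on paired pairs offers only lower bounds on $\tilde\a$-quantities and an upper bound must be extracted by cancellation. The strategy is to polarize the identity $\xi=\xi_n+(\xi-\xi_n)$ to obtain
\begin{align*}
\tilde\a(\xi) = \tilde\a(\xi_n) + 2\Re\tilde\a(\xi_n,\xi-\xi_n) + \tilde\a(\xi-\xi_n),
\end{align*}
and to combine it with the pairing of $\xi_n$ and $\xi-\xi_n$ (which in the direct-integral and vector-bundle settings of interest holds with $|\xi-\xi_n|=|\xi|-v_n$ because $\xi_n$ is, roughly, $\xi$ shrunk in modulus from $|\xi|$ to $v_n$). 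Applying domination one gets $\Re\tilde\a(\xi_n,\xi-\xi_n)\geq\tilde\b(v_n,|\xi|-v_n)=\tfrac12\bigl(\tilde\b(|\xi|)-\tilde\b(v_n)-\tilde\b(|\xi|-v_n)\bigr)$. Substituting and using $\tilde\a(\xi-\xi_n)\geq 0$ gives
\begin{align*}
\tilde\a(\xi_n)\leq\tilde\a(\xi)-\tilde\b(|\xi|)+\tilde\b(v_n)+\tilde\b(|\xi|-v_n),
\end{align*}
and since $v_n\to|\xi|$ in $\|\cdot\|_\b$ delivers $\tilde\b(v_n)\to\tilde\b(|\xi|)$ and $\tilde\b(|\xi|-v_n)\to 0$, the desired estimate $\limsup_n\tilde\a(\xi_n)\leq\tilde\a(\xi)$ follows. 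The only nontrivial ingredient is the pairing between $\xi_n$ and $\xi-\xi_n$, which must be secured from the axioms of the absolute pairing; this is the step where one leans on the abstract machinery of \cite{LSW17}.
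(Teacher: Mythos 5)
Your argument is correct in substance but takes a genuinely different route from the paper. The paper proves density by duality: it takes $h$ in the orthogonal complement of $D_\a$ in $(D(\a),\langle\cdot,\cdot\rangle_\a)$, uses the generalized ideal property together with the two ideal hypotheses to produce, for each $v\in D_\b^+$ with $v\le\abs{h}$, an element $\tilde h\in D_\a$ paired with $h$ and with $\abs{\tilde h}=v$, and then combines $\langle h,\tilde h\rangle_\a=0$ with domination and Lemma \ref{approx_pos} to conclude $\norm{\abs{h}}_\b^2\le 0$, hence $h=0$. You instead approximate a given $\xi\in D(\a)$ directly and control energies through a $\limsup$ estimate. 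Concerning the one step you defer to \cite{LSW17}: the pairing of $\xi_n$ with $\xi-\xi_n$ and the identity $\abs{\xi-\xi_n}=\abs{\xi}-v_n$ actually follow from the axioms as stated in Section \ref{chap_positivity}, so no external input is needed. Indeed, for $\zeta,\eta\in\H$ and $f\in L^2_+(X,\mu)$, choosing $\omega$ paired with $\zeta+\eta$ and $\abs{\omega}=f$ gives $\langle\abs{\zeta+\eta},f\rangle=\Re\langle\zeta,\omega\rangle+\Re\langle\eta,\omega\rangle\le\langle\abs{\zeta}+\abs{\eta},f\rangle$, whence $\abs{\zeta+\eta}\le\abs{\zeta}+\abs{\eta}$; applied to $\xi=(\xi-\xi_n)+\xi_n$ this yields $\abs{\xi-\xi_n}\ge\abs{\xi}-v_n\ge0$, and since your $\H$-norm computation shows $\norm{\abs{\xi-\xi_n}}_{L^2}=\norm{\abs{\xi}-v_n}_{L^2}$, equality holds a.e.; finally $\langle\xi_n,\xi-\xi_n\rangle=\langle\xi_n,\xi\rangle-\norm{\xi_n}^2=\langle v_n,\abs{\xi}\rangle-\norm{v_n}^2=\langle v_n,\abs{\xi}-v_n\rangle$, which is exactly the pairing. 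With this lemma supplied, your $\limsup$ estimate, the weak compactness step and the upgrade from weak plus norm convergence to strong $\norm{\cdot}_\a$-convergence are all sound. The trade-off: your proof is constructive and produces explicit approximants $\xi_n\in D_\a$ with $\abs{\xi_n}\le\abs{\xi}$, which may be useful in its own right; the paper's orthocomplement argument is shorter, applies domination only once to the single pair $h,\tilde h$ handed over by the generalized ideal property, and never needs to pair an approximant against a difference.
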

\begin{proof}
To ease notation, we assume without loss of generality that $\a$ and $\b$ are positive. As $\a$
is closed, $D(\a)$ is a Hilbert space with the inner product
$\langle\cdot,\cdot\rangle_\a=\langle\cdot,\cdot\rangle_{\H}+\a(\cdot,\cdot)$
and analogously for $\b$.

\smallskip

We show that $D_\a\subset D(\a)$ is dense with respect to
$\norm\cdot_\a$ by proving that $D_\a^\perp=\{0\}$ in
$(D(\a),\langle\cdot,\cdot\rangle_\a)$. For this purpose, let $h\in
D(\a)$ such that
\begin{align*}
0=\langle h,u\rangle_\a=\langle h,u\rangle+\a(h,u)
\end{align*}
for all $u\in D_\a$.

\smallskip

Consider $v\in D_\b^+$ such that $v\leq \abs{h}$. Since $\a$ is
dominated by $\b$, $D(\a)$ is a generalized ideal of $D(\b)$. Hence,
as $h$ belongs to $D (\a)$,  there exists $\tilde h\in D(\a)$ such
that $\abs{\tilde h}=v$ and $h,\tilde h$ are paired. This implies in
particular,
$$v= \abs{\tilde h} \in D_\b^+\cap \abs{D(\a)}\subset \abs{D_\a},$$
 where we used the assumption on $D_\b$ for the last
inclusion. Since $D_\a$ is an ideal in $D(\a)$ we then obtain
$\tilde h\in D_\a$. By the orthogonality assumption on $h$ above
this implies
$$0 = \langle h, \tilde h\rangle_\a.$$
Now, as  $\a$ is dominated by $\b$, we have $\abs{h}\in D(\b)$ and
from the preceding equality and domination we infer
\begin{align*}
0=\langle h,\tilde h\rangle+\Re\a(h,\tilde h)\geq \langle \abs{h},v)\rangle+\b(\abs{h},v).\tag{$\ast$}\label{ineq}
\end{align*}
As
$D_\b$ is a core for $\b$,  Lemma \ref{approx_pos} can be applied
with $v = \abs{h}$ and  there exists a sequence $(v_n)$ in $D_\b$
such that $0\leq v_n\leq \abs{h}$ and $\norm{v_n-\abs{h}}_\b\to 0$.
Applying inequality (\ref{ineq}) to $v= v_n$ we obtain
\begin{align*}
0\geq \langle \abs{h}, v_n\rangle+\b(\abs{h}, v_n)=\langle \abs{h}, v_n\rangle_\b\to \norm{\abs{h}}_\b^2.
\end{align*}
Hence $\abs{h}=0$ and therefore also $h=0$. Thus, $D_\a^\perp=\{0\}$.
\end{proof}

\begin{remark}
\begin{itemize}
\item In applications, the situation will often be as follows: We are
given forms $\a_0$ on $D_\a$, $\b_0$ on $D_\b$ (usually not closed)
and minimal extensions $\a_{\text{min}}$, $\b_{\text{min}}$ (the
closures of $\a_0$, $\b_0$) and maximal extensions
$\a_{\text{max}}$, $\b_{\text{max}}$. If
$\b_{\text{min}}=\b_{\text{max}}$, then the theorem yields
$\a_{\text{min}}=\a_{\text{max}}$. This situation is discussed in
detail in the subsequent two sections.

\item If $\H=L^2(X,\mu)$, $\abs\cdot$ is the pointwise modulus, and
$D_\a=D_\b$, then the condition $D_ \b ^+\subset \abs{D_\a}$ is
automatically satisfied.
%
\end{itemize}
\end{remark}

We now turn to  a  corollary that contains the concrete situation of
our applications in the next sections.  There, we consider a
Lindelöf space $X$ and  $\mu$ a Borel measure on $X$ and a
Hermitian vector bundle $E$ over $X$ (compare Example
\ref{hermitian-bundle} above for details). In this situation, we
denote by $L^2_c(X,\mu)$ the space of square integrable functions
that vanish outside a compact set and  by $L^2_c(X,\mu;E)$ the space
of square integrable sections in $E$ that vanish outside a compact
set.

\begin{corollary}\label{corollary_uniqueness_forms}
Let $X$ be a Lindelöf space, $\mu$ a Borel measure on $X$ and
 $E$  a Hermitian vector bundle over $X$. Assume that $\b$ is a closed form in $L^2(X,\mu)$ satisfying the first Beurling-Deny criterion and $\a$ a closed form in $L^2(X,\mu;E)$ that is dominated by $\b$.
If $D(\b)\cap L^2_c(X,\mu)$ is a core for $\b$, then $D(\a)\cap L^2_c(X,\mu;E)$ is a core for $\a$.
\end{corollary}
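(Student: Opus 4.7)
The plan is to derive this corollary as a direct application of Theorem \ref{uniqueness_forms}. I would take $\H = L^2(X,\mu;E)$ equipped with the absolute pairing $|\xi|(x) = |\xi(x)|_x$ from Example \ref{hermitian-bundle}, and choose the candidate cores
\[
D_\a := D(\a) \cap L^2_c(X,\mu;E), \qquad D_\b := D(\b) \cap L^2_c(X,\mu).
\]
Hypotheses of the theorem then amount to four verifications: that $D_\a$ is an ideal in $D(\a)$, that $D_\b$ is an ideal in $D(\b)$, that $D_\b^+ \cap |D(\a)| \subset |D_\a|$, and that $D_\b$ is a form core for $\b$. The last is given, and domination of $\a$ by $\b$ is also given.

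For the two ideal conditions: if $\xi \in D_\a$ and $\eta \in D(\a)$ with $|\eta| \leq |\xi|$ a.e., then $|\xi|$ vanishes off some compact $K$, so $|\eta|(x) = |\eta(x)|_x = 0$ off $K$, which forces $\eta(x) = 0$ off $K$ in each fiber. Hence $\eta \in L^2_c(X,\mu;E)$, so $\eta \in D_\a$. The identical argument with pointwise modulus gives that $D_\b$ is an ideal in $D(\b)$.

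The main step I expect to require care is the third condition, $D_\b^+ \cap |D(\a)| \subset |D_\a|$. Given $f \in D_\b^+ \cap |D(\a)|$, write $f = |\xi|$ for some $\xi \in D(\a)$; in particular $f$ is compactly supported and $0 \leq f \leq |\xi|$ trivially. Here I would invoke the generalized ideal property of $D(\a)$ in $D(\b)$, which is part of the definition of domination: it produces $\eta \in D(\a)$ with $|\eta| = f$ and $\xi$, $\eta$ paired. Since $|\eta| = f$ has compact support and $|\eta(x)|_x = 0$ implies $\eta(x) = 0$ fiberwise, $\eta$ is compactly supported, hence $\eta \in D_\a$. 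This gives $f = |\eta| \in |D_\a|$ as required.

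With these four conditions in hand, Theorem \ref{uniqueness_forms} applies directly and yields that $D_\a = D(\a) \cap L^2_c(X,\mu;E)$ is a form core for $\a$. No further computation is needed; the real content is that domination provides, via the generalized ideal property, the compactly supported pairings needed to push the core property from the scalar form $\b$ up to the bundle form $\a$.
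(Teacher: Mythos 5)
Your proof is correct and follows essentially the same route as the paper: apply Theorem \ref{uniqueness_forms} with $D_\a = D(\a)\cap L^2_c(X,\mu;E)$ and $D_\b = D(\b)\cap L^2_c(X,\mu)$, the only real content being the inclusion $D_\b^+\cap\abs{D(\a)}\subset\abs{D_\a}$. For that step your detour through the generalized ideal property is unnecessary: given $f=\abs{\xi}$ with $f$ compactly supported, $\xi$ itself is already compactly supported, hence $\xi\in D_\a$ and $f\in\abs{D_\a}$ directly, which is what the paper does.
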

\begin{proof}
We will apply Theorem \ref{uniqueness_forms} to $D_\a=D(\a)\cap L^2_c(X,\mu;E)$ and $D_\b=D(\b)\cap L^2_c(X,\mu)$. It is obvious that these are ideals in $D(\a)$ and $D(\b)$ respectively.\\
Now let $g\in D_\b^+\cap \abs{D(\a)}$. Then there is an $f\in D(\a)$ such that $\abs{f}=g\in L^2_c(X,\mu)$. Thus, $f\in D(\a)\cap  L^2_c(X,\mu;E)$ and $g=\abs{f}\in \abs{D_\a}$.
\end{proof}

\begin{remark}
\begin{itemize}
\item If $\b$ is a regular Dirichlet form, $D(\b)\cap
L^2_c(X,\mu)$ is a core for $\b$. Indeed, $D(\b)\cap C_c(X)\subset
D(\b)\cap L^2_c(X,\mu)$ is dense in $D(\b)$ by definition.
Note, however, that we do not use the second Beurling-Deny criterion
in our reasoning.
\item The preceding corollary concerns  bundles $E$
over an  underlying topological space $X$. As discussed in Example
\ref{hermitian-bundle}, the  space of the $L^2$-sections in the
bundle can also be considered as a direct integral of Hilbert spaces
over $X$. In fact, it is easily possible to  generalize the
corollary to the setting of  direct integrals discussed in Example
\ref{direct-integral}, but we do not need this for the purposes of
this article.
\end{itemize}
\end{remark}

In applications to manifolds one is in an even more regular
situation. More specifically, in the smooth case, one is usually
interested in the closure of the form defined on smooth functions
(sections) as minimal form. We make the following definition adapted
to this situation.

\begin{definition}\label{smoothly_inner_reg}
Let $M$ be a Riemannian manifold and $E\lra M$ a smooth Hermitian
vector bundle and denote by $\Gamma_c(M;E)$ the space of compactly supported smooth sections in $M$. A form $\a$ on $L^2(M;E)$ is called \emph{smoothly
inner regular} if $D(\a)\cap \Gamma_c(M;E)$ is dense in
$D(\a)\cap L^2_c(M;E)$ with respect to $\norm\cdot_\a$.
\end{definition}

From the definition of smooth inner regularity and the above
corollary, the following corollary can easily be deduced.

\begin{corollary}\label{uniqueness_smooth}
Let $M$ be a Riemannian manifold and $E\lra M$ a smooth Hermitian
vector bundle. Let $\b$ be a closed form on $L^2(M)$ satisfying the first Beurling-Deny criterion and $\a$ a
closed, smoothly inner regular form on $L^2(M;E)$ that is dominated
by $\b$. If $D(\b)\cap C_c^\infty(M)$ is a core for $\b$, then
$D(\a)\cap\Gamma_c(M;E) $ is a core for $\a$.
\end{corollary}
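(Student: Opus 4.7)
The plan is straightforward: this corollary should follow directly from Corollary \ref{corollary_uniqueness_forms} combined with the smooth inner regularity hypothesis on $\a$. All of the substantive work has already been done in the preceding corollary and packaged into the definition of smooth inner regularity, so only a short chaining of density statements is required.

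First, I would verify the hypotheses of Corollary \ref{corollary_uniqueness_forms}. A Riemannian manifold is a (second countable, hence separable metric) Lindelöf space, so the topological setting matches, and the Riemannian volume gives a Borel measure on $M$. The form $\b$ is closed and satisfies the first Beurling--Deny criterion, and $\a$ is closed on $L^2(M;E)$ and dominated by $\b$ by assumption. The remaining hypothesis of that corollary is that $D(\b) \cap L^2_c(M,\mu)$ is a form core for $\b$. Since $C_c^\infty(M) \subset L^2_c(M,\mu)$, one has the inclusions
\[
D(\b) \cap C_c^\infty(M) \;\subset\; D(\b) \cap L^2_c(M,\mu) \;\subset\; D(\b),
\]
and the $\norm{\cdot}_\b$-density of the leftmost set in $D(\b)$ (the given hypothesis) immediately forces the middle set to be $\norm{\cdot}_\b$-dense in $D(\b)$ as well.

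Applying Corollary \ref{corollary_uniqueness_forms} then yields that $D(\a) \cap L^2_c(M;E)$ is $\norm{\cdot}_\a$-dense in $D(\a)$. To upgrade this to smooth sections, I would invoke smooth inner regularity (Definition \ref{smoothly_inner_reg}): by definition $D(\a) \cap \Gamma_c(M;E)$ is $\norm{\cdot}_\a$-dense in $D(\a) \cap L^2_c(M;E)$. Transitivity of density in the Hilbert space $(D(\a),\langle\cdot,\cdot\rangle_\a)$ then gives that $D(\a) \cap \Gamma_c(M;E)$ is $\norm{\cdot}_\a$-dense in $D(\a)$, which is precisely the claim.

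There is essentially no genuine obstacle here: smooth inner regularity was introduced precisely to bridge the gap between the $L^2_c$-level conclusion supplied by Corollary \ref{corollary_uniqueness_forms} and the smooth-section-level statement desired on manifolds, while the passage from the $C_c^\infty$-core hypothesis on $\b$ to the $L^2_c$-core hypothesis required by that corollary is immediate from $C_c^\infty(M) \subset L^2_c(M,\mu)$.
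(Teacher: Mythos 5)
Your proposal is correct and follows exactly the route the paper intends: the paper itself only remarks that the corollary "can easily be deduced" from Corollary \ref{corollary_uniqueness_forms} together with the definition of smooth inner regularity, and your chaining of the two density statements (plus the observation that $C_c^\infty(M)\subset L^2_c(M,\mu)$ upgrades the core hypothesis on $\b$) is precisely that deduction.
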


\section{Schr\"odinger forms on weighted Riemannian manifolds}\label{Applications-mfld}

In this section we study quadratic forms associated to Schrödinger operator on vector bundles over Riemannian manifolds. This kind of operators and the associated forms have been extensively studied in the last decades, let us just mention \cite{BG20,BMS02,Gue14,HSU80} as references covering all necessary basics for this section.

After introducing the quadratic forms in question, we prove that the Schrödinger forms on vector bundles are dominated by the corresponding forms acting on functions (Proposition \ref{domination_manifolds}), which implies the uniqueness result in this setting (Theorem \ref{uniqueness_Schrödinger_manifolds}). Finally we discuss how this result enables us to apply capacity estimates to uniqueness questions for Schrödinger forms on vector bundles (Corollary \ref{capacity_estimates}).

Throughout this section let $(M,g,\mu)$ be a weighted
Riemannian manifold, that is, $(M,g)$ is a Riemannian manifold and
$\mu=e^{-\psi}\vol_g$ for some $\psi\in C^\infty(M)$. All (local)
Lebesgue and Sobolev spaces are taken with respect to the measure
$\mu$.

\begin{definition}[Regular Schrödinger bundle \cite{BG20}]
A \emph{regular Schr\"odinger bundle} is a triple
$(E,\nabla,W)$ consisting of
\begin{itemize}
\item a smooth Hermitian vector bundle $E$ over $M$,
\item a metric covariant derivative $\nabla$ on $E$,
\item a potential $W\in L^1_\loc(M;\End(E))_+$.
\end{itemize}
\end{definition}

Here $\End(E)$ denotes the set of bundle endomorphisms of $E$, that is, the smooth maps from $E$ to itself that restrict to linear maps on the fibers. It can be made into a vector bundle in such a way that the fiber over $x$ is the space of all linear maps from $E_x$ to itself. As usual, $L^1_{\loc}(M;\End(E))$ denotes the set of all locally integrable sections in the vector bundle $\End(E)$, and the subscript $+$ indicates the subset of sections which are positive linear operators on $E_x$ for a.e. $x\in M$.

For a vector bundle $E\lra M$ we denote by $\Gamma(M;E)$ the space
of smooth sections and by $\Gamma_c(M;E)$ the subspace of compactly
supported smooth sections. If $E\lra M$ is Hermitian, we write
$\langle\cdot|\cdot\rangle$ and $\abs\cdot$ for the inner product
and induced norm on the fibers, respectively.

%

\begin{example}
If $\eta\in\Gamma(M;T^\ast M)$, then
$\nabla=d+i\eta$ is a metric covariant derivative on the trivial
complex line bundle $M\times\IC\lra M$. Thus, magnetic Schrödinger
operators with electric potential are naturally included in this
setting.
\end{example}

\begin{definition}[Schrödinger form with Neumann boundary conditions]
Let
\begin{align*}
W^{1,2}(M;E)&=\{\Phi\in L^2(M;E)\mid \nabla \Phi\in L^2(M;E\otimes
T^\ast_\IC M)\},
\end{align*}
where $\nabla$ is to be understood in the distributional sense. The
space $W^{1,2}_\loc(M;E)$ is defined accordingly.

The Schrödinger form with Neumann boundary conditions $ \E^{(N)}_{\nabla,W}$ is defined by
\begin{align*}
D( \E^{(N)}_{\nabla,W})&=\{\Phi\in W^{1,2}(M;E)\mid \langle W\Phi|\Phi\rangle\in L^2(M)\},\\
 \E^{(N)}_{\nabla,W}(\Phi)&=\int_M \abs{\nabla\Phi}^2\,d\mu+\int_M\langle W\Phi|\Phi\rangle\,d\mu.
\end{align*}
\end{definition}
Just as in the scalar case one shows that $ \E^{(N)}_{\nabla,W}$ is
closed.

\begin{definition}[Schrödinger form with Dirichlet boundary conditions]
The Schrödinger form with Dirichlet boundary conditions $\E^{(N)}_{\nabla,W}$ is the closure of the restriction of $
\E^{(N)}_{\nabla,W}$ to $\Gamma_c(M;E)$.
\end{definition}

In the scalar case when $\nabla$ is simply the exterior derivative $d$
on functions, we will write $\E^{(N)}_W$ and $\E^{(D)}_W$ for $
\E^{(N)}_{d,W}$ and $ \E^{(D)}_{d,W}$ respectively, and simply
$\E^{(N)}$ and $\E^{(D)}$ if $W=0$. It is well-known that the forms $\E^{(N)}_W$ and $\E^{(D)}_W$ are Dirichlet forms (see e.g. \cite{Fu}, Section 1.2). In particular, they satisfy the first Beurling-Deny criterion.

We will now establish that $ \E^{(N)}_{\nabla,W}$ is smoothly inner
regular in the sense of Definition \ref{smoothly_inner_reg}. This result should be well-known, but since we could not find a reference, we outline its proof here. We
write $W^{1,2}_c(M;E)$ for $W^{1,2}(M;E)\cap L^2_c(M;E)$, the space
of all sections in $W^{1,2}$ with compact support.

\begin{lemma}\label{covariant_inner_reg}
The space $\Gamma_c(M;E)$ is dense in $W^{1,2}_c(M;E)$ and $D(
\E^{(N)}_{\nabla,W})\cap L^2_c(M;E)$.
\end{lemma}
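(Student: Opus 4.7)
My plan is to carry out a Meyers--Serrin style construction: localize via a finite partition of unity subordinate to trivializing coordinate charts and then apply Friedrichs mollification in each chart. This suffices for the first statement, while the second statement requires an additional truncation in the fibers to cope with the merely $L^1_\loc$ regularity of the potential $V$.

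For the $W^{1,2}_c(M;E)$ statement, fix $\Phi$ with compact support $K$, cover a neighborhood of $K$ by finitely many open sets $U_j$ each carrying both a chart and a smooth local trivialization of $E$, and pick a smooth partition of unity $(\chi_j)$ subordinate to this cover. Writing $\Phi=\sum_j\chi_j\Phi$ reduces the problem to approximating each $\chi_j\Phi$ individually. In a trivialization, $\chi_j\Phi$ becomes a compactly supported $\IC^r$-valued $W^{1,2}$ function on an open set in $\IR^n$, with $\nabla$ acting as $d+A_j$ for a smooth $\End$-valued one-form $A_j$. A standard Friedrichs mollifier $J_\epsilon$ applied componentwise produces smooth, compactly supported approximations with $J_\epsilon(\chi_j\Phi)\to\chi_j\Phi$ and $dJ_\epsilon(\chi_j\Phi)=J_\epsilon(d(\chi_j\Phi))\to d(\chi_j\Phi)$ in $L^2$; combined with $A_jJ_\epsilon(\chi_j\Phi)\to A_j(\chi_j\Phi)$ in $L^2$, which uses only boundedness of $A_j$ on the compact set in question, summing over $j$ yields the desired approximation in $W^{1,2}$-norm.

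For the second statement, let $\Phi\in D(\E^{(N)}_{\nabla,V})\cap L^2_c(M;E)$. First replace $\Phi$ by the radial truncations $\Phi^{(k)}:=(1\wedge k/|\Phi|)\Phi$, which are bounded sections in $L^2_c(M;E)$ satisfying $|\Phi^{(k)}|\le|\Phi|$ and $\Phi^{(k)}\to\Phi$ pointwise a.e. The crucial observation for the potential is that $\Phi-\Phi^{(k)}$ is a real scalar multiple of $\Phi$, so $\langle V(\Phi-\Phi^{(k)})|\Phi-\Phi^{(k)}\rangle\le\langle V\Phi|\Phi\rangle$ pointwise, and dominated convergence (using $\langle V\Phi|\Phi\rangle\in L^1$) delivers convergence of the potential term. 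For the gradient part, a chain rule / Kato inequality argument for Hermitian connections gives $\Phi^{(k)}\in W^{1,2}$ with $|\nabla\Phi^{(k)}|$ pointwise dominated by a constant multiple of $|\nabla\Phi|$, together with $\nabla\Phi^{(k)}\to\nabla\Phi$ a.e.\ (since $\Phi^{(k)}=\Phi$ on $\{|\Phi|\le k\}$ and these sets exhaust $M$ up to null sets), so dominated convergence gives $\Phi^{(k)}\to\Phi$ in the form norm. Then apply the mollification scheme from the first part to each $\Phi^{(k)}$; the uniform bound $|J_\epsilon\Phi^{(k)}|\le k$ from Jensen's inequality, together with the local integrability of $\|V\|_{\mathrm{op}}$, makes dominated convergence available in the potential term and finishes the argument.

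The main obstacle is the potential term in the second statement: the low regularity $V\in L^1_\loc$ combined with the possible unboundedness of a generic $\Phi\in L^2_c$ prevents a direct dominated convergence after mollification, and the two-step scheme (first truncate, then mollify) is precisely what makes both dominations work. The chain rule step for the truncation is standard but deserves a little care, which is presumably what the authors have in mind when they remark that the lemma ``should be well-known.''
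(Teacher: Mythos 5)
Your proof is correct and, for the density in $W^{1,2}_c(M;E)$, follows exactly the paper's route: a finite partition of unity subordinate to trivializing charts, reduction to compactly supported $\IC^r$-valued functions on open subsets of $\IR^n$, and the standard Euclidean density (which the paper simply cites as well known where you spell out the Friedrichs mollification). For the second statement the paper only remarks that ``this argument can easily be extended to the case of non-vanishing potentials,'' whereas you actually supply the extension, and your two-step scheme --- first the radial truncations $\Phi^{(k)}=(1\wedge k/\abs{\Phi})\Phi$ with the pointwise domination $\langle V(\Phi-\Phi^{(k)})|\Phi-\Phi^{(k)}\rangle\leq\langle V\Phi|\Phi\rangle$ and the Kato/product-rule bound $\abs{\nabla\Phi^{(k)}}\lesssim\abs{\nabla\Phi}$, then mollification of the bounded truncations using $\norm{V}_{\mathrm{op}}\in L^1_\loc$ --- is precisely the kind of argument needed to make that remark honest, so your write-up is in this respect more complete than the paper's. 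The only point deserving a word of care is the chain rule for $t\mapsto 1\wedge(k/t)$ composed with $\abs{\Phi}$, since this function is Lipschitz but not smooth at $t=k$ (the paper's calculus lemma assumes smoothness); this is easily repaired by a smoothed truncation or by the general Lipschitz chain rule for Sobolev functions and does not affect the validity of the argument.
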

\begin{proof}
Let $\Omega\subset M$ be open and relatively compact. Relative
compactness ensures that $\Omega$ can be covered by finitely many
charts $(U_1,\phi_1),\dots,(U_n,\phi_n)$ such that $\bigcup_{k=1}^n
U_k$ is relatively compact and such that there exist trivializations
$\psi_k$ for $E|_{U_k}$.

Let $(\lambda_k)$ be a partition of unity subordinate to $(U_k)$.
For $\Phi\in L^2(M;E)$ define $\Phi_k$ by
\begin{align*}
\Phi_k\colon
\phi_k(U_k)\overset{\phi_k^{-1}}{\lra}U_k\overset{\lambda_k\Phi}{\lra}E|_{U_k}\overset{\psi_k^{-1}}{\lra}U_k\times
\IC^n\overset{\mathrm{pr}_2}{\lra}\IC^n.
\end{align*}
In particular, if $\Phi\in W^{1,2}(M;E)$, then $\Phi_k\in
W^{1,2}_c(\phi_k(U_k);\IC^n)$. Moreover, since $\bigcup_k U_k$ is
relatively compact, there are constants $c,C>0$ such that
\begin{align*}
c\norm{\Phi_k}_{W^{1,2}(\phi_k(U_k);\IC^n)}^2\leq
\int_{U_k}(\abs{\lambda_k \Phi}^2+\abs{\nabla (\lambda_k\Phi)}^2)\,d\mu\leq
C\norm{\Phi_k}_{W^{1,2}(\phi_k(U_k);\IC^n)}^2
\end{align*}
for all $\Phi\in W^{1,2}(M;E)$, $k\in \{1,\dots,N\}$, as can be
easily seen from the local coordinate expression for $\nabla$.  It
is well-known that $C_c^\infty(\phi_k(U_k);\IC^n)$ is dense in
$W^{1,2}_c(\phi_k(U_k);\IC^n)$. Using the norm estimate above and
the partition of unity, this gives the density of $\Gamma_c(M;E)$ in
$W^{1,2}_c(M;E)$.

This argument can easily be extended to the case of non-vanishing
potentials, we just wanted to avoid the additional notation.
\end{proof}

In the following lemma we collect some useful calculus rules for the
weak covariant derivative. They can be derived from the
corresponding rules for smooth sections by first noticing that they
are local, hence it suffices to verify them for compactly supported
sections, and then using the density of $\Gamma_c(M;E)$ in
$W^{1,2}_c(M;E)$.
\begin{lemma}
If $v\in W^{1,2}_\loc(M)$, $\Phi,\Psi\in W^{1,2}_\loc(M;E)$ and
$f\colon \IC\lra \IC$ is smooth and Lipschitz, then
$\nabla(v\Phi)\in L^1_\loc(M;E\otimes T^\ast_\IC M)$, $d\langle
\Phi|\Psi\rangle\in L^1_\loc(M;T^\ast_\IC M)$, $f\circ v\in
W^{1,2}_\loc(M)$ and
\begin{align*}
\nabla(v\Phi)&=v\nabla \Phi+\Phi\otimes d v,\\
d\langle \Phi|\Psi\rangle&=\langle \nabla \Phi|\Psi\rangle+\langle\Phi|\nabla\Psi\rangle,\\
d(f\circ v)&=(f'\circ v)dv.
\end{align*}
\end{lemma}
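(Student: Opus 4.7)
The plan is to follow the strategy outlined by the authors: verify that each identity is genuinely a \emph{local} statement, reduce to the compactly supported case by multiplication with a smooth cutoff, and then pass from the classical identities for smooth sections to the weak identities by the approximation result of Lemma \ref{covariant_inner_reg}. More precisely, fixing a relatively compact open set $\Omega\subset M$, it suffices to establish the identities on $\Omega$; after replacing $v,\Phi,\Psi$ by $\chi v,\chi\Phi,\chi\Psi$ for a cutoff $\chi\in C_c^\infty(M)$ with $\chi\equiv 1$ on a neighbourhood of $\overline{\Omega}$, one may assume $v\in W^{1,2}_c(M)$ and $\Phi,\Psi\in W^{1,2}_c(M;E)$. (For the chain rule only the reduction for $v$ is used.)

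Next I would approximate: by the classical Meyers--Serrin theorem there is a sequence $v_n\in C_c^\infty(M)$ with $v_n\to v$ in $W^{1,2}(M)$, and by Lemma \ref{covariant_inner_reg} there are sequences $\Phi_n,\Psi_n\in\Gamma_c(M;E)$ with $\Phi_n\to\Phi$ and $\Psi_n\to\Psi$ in $W^{1,2}(M;E)$. Passing to a subsequence I would arrange pointwise a.e.\ convergence of all sequences and of their (covariant) derivatives. For the smooth approximants the three identities
\begin{align*}
\nabla(v_n\Phi_n)&=v_n\nabla\Phi_n+\Phi_n\otimes dv_n,\\
d\langle \Phi_n\mid\Psi_n\rangle&=\langle\nabla\Phi_n\mid\Psi_n\rangle+\langle\Phi_n\mid\nabla\Psi_n\rangle,\\
d(f\circ v_n)&=(f'\circ v_n)\,dv_n
\end{align*}
hold pointwise by standard differential calculus on the trivializing charts.

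To pass to the limit in the first two identities I would apply the Cauchy--Schwarz inequality on the common compact support to deduce $L^1$-convergence of every relevant product: $v_n\nabla\Phi_n\to v\nabla\Phi$, $\Phi_n\otimes dv_n\to\Phi\otimes dv$, $\langle\Phi_n\mid\Psi_n\rangle\to\langle\Phi\mid\Psi\rangle$, and the two cross terms in the derivation rule. Testing the smooth identities against an arbitrary compactly supported smooth form and sending $n\to\infty$ then yields the desired distributional identities, while the $L^1_\loc$-memberships drop out automatically. For the chain rule, the Lipschitz bound on $f$ gives $\norm{f\circ v_n-f\circ v}_{L^2}\leq\operatorname{Lip}(f)\norm{v_n-v}_{L^2}$; since $f'$ is continuous and bounded, dominated convergence along an a.e.\ convergent subsequence yields $(f'\circ v_n)\,dv_n\to(f'\circ v)\,dv$ in $L^2_\loc$, so that $f\circ v\in W^{1,2}_\loc(M)$ with the claimed derivative.

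The main obstacle I expect is purely one of book-keeping: because $W^{1,2}$ alone does not embed into $L^\infty$ or into $L^p$ for large $p$ in general, the products $v\Phi$ and $\langle\Phi\mid\Psi\rangle$ must be treated as $L^1_\loc$ rather than $L^2_\loc$ objects, and all limit arguments have to be phrased at this level; this is precisely why the lemma is stated with $L^1_\loc$-conclusions on the right-hand sides. Once the statements are formulated at this level, everything is driven by Cauchy--Schwarz on compact sets together with Lemma \ref{covariant_inner_reg} and its standard scalar analogue.
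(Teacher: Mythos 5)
Your proposal is correct and is essentially a fleshed-out version of the paper's own (one-sentence) argument: the paper likewise reduces by locality to compactly supported sections and then invokes the density of $\Gamma_c(M;E)$ in $W^{1,2}_c(M;E)$ from Lemma \ref{covariant_inner_reg}, exactly as you do. Your additional details (Cauchy--Schwarz for the $L^1_\loc$ products, dominated convergence for the chain rule) correctly fill in the limit passages the paper leaves implicit.
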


\begin{proposition}\label{domination_manifolds}
If $(E,\nabla,W)$ is a regular Schrödinger bundle and $V\in
L^1_\loc(M)_+$ such that $W(x)\geq V(x)\mathrm{id}_x$ in the sense of quadratic forms for a.e. $x\in M$, then $
\E^{(N)}_{\nabla,W}$ is dominated by $\E^{(N)}_{W}$.
\end{proposition}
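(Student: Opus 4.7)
By Example~\ref{hermitian-bundle} the absolute map $\abs{\cdot}\colon L^2(M;E)\to L^2_+(M)$ given by the pointwise fibre norm $\abs{\Phi}(x)=\abs{\Phi(x)}_x$ is an absolute pairing; two sections $\Phi,\Psi$ are paired precisely when there is a measurable $c\colon M\to[0,\infty)$ with $\Psi=c\Phi$ a.e.\ on $\{\abs{\Phi}>0\}$. So two things must be verified: that $D(\E^{(N)}_{\nabla,V})$ is a generalized ideal of $D(\E^{(N)}_W)$, and that $\Re\,\E^{(N)}_{\nabla,V}(\Phi,\Psi)\geq \E^{(N)}_W(\abs{\Phi},\abs{\Psi})$ whenever $\Phi,\Psi\in D(\E^{(N)}_{\nabla,V})$ are paired.

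\textbf{Kato inequality.} The workhorse is the covariant Kato inequality: if $\Phi\in W^{1,2}_{\loc}(M;E)$, then $\abs{\Phi}\in W^{1,2}_{\loc}(M)$ with $\abs{d\abs{\Phi}}\leq\abs{\nabla\Phi}$ a.e. One proves this by regularising with $\abs{\Phi}_\epsilon:=(\abs{\Phi}^2+\epsilon^2)^{1/2}$, using the calculus rules of the preceding lemma to compute $d\abs{\Phi}_\epsilon=\Re\langle\nabla\Phi|\Phi\rangle/\abs{\Phi}_\epsilon$, estimating $\abs{d\abs{\Phi}_\epsilon}\leq\abs{\nabla\Phi}$ by Cauchy-Schwarz, and passing to the limit $\epsilon\downto 0$. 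Coupled with the assumed pointwise bound $W\abs{\Phi}^2\leq\langle V\Phi|\Phi\rangle$, this already gives $\abs{\Phi}\in D(\E^{(N)}_W)$ for every $\Phi\in D(\E^{(N)}_{\nabla,V})$.

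\textbf{Ideal property and form inequality.} Given $\Phi\in D(\E^{(N)}_{\nabla,V})$ and $f\in D(\E^{(N)}_W)$ with $0\leq f\leq\abs{\Phi}$, set $\Psi:=(f/\abs{\Phi})\Phi$ on $\{\abs{\Phi}>0\}$ and $\Psi:=0$ otherwise, so $\abs{\Psi}=f$ and $\Phi,\Psi$ are paired. To see $\Psi\in W^{1,2}(M;E)$, approximate by $\Psi_\epsilon:=(f/\abs{\Phi}_\epsilon)\Phi$, apply the product rule to get
\[
\nabla\Psi_\epsilon=\frac{f}{\abs{\Phi}_\epsilon}\nabla\Phi+\Phi\otimes d\!\left(\frac{f}{\abs{\Phi}_\epsilon}\right),
\]
and derive a uniform $L^2$-bound from $f/\abs{\Phi}_\epsilon\leq 1$ and the Kato estimate; a weak-limit argument in $L^2$ then identifies the limit with the distributional $\nabla\Psi$. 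The pointwise inequality $\langle V\Psi|\Psi\rangle=(f/\abs{\Phi})^2\langle V\Phi|\Phi\rangle\leq\langle V\Phi|\Phi\rangle$ shows $\Psi\in D(\E^{(N)}_{\nabla,V})$, establishing the generalized ideal property. For the form inequality, write a paired pair as $\Psi=c\Phi$, so $\abs{\Psi}=c\abs{\Phi}$. Expanding $\nabla\Psi=c\nabla\Phi+\Phi\otimes dc$ and using the identity $\Re\langle\nabla\Phi|\Phi\rangle=\abs{\Phi}\,d\abs{\Phi}$ on $\{\abs{\Phi}>0\}$ yields the polarised pointwise bound
\[
\Re\langle\nabla\Phi|\nabla\Psi\rangle\geq\langle d\abs{\Phi}|d\abs{\Psi}\rangle,
\]
which is trivial on $\{\Phi=0\}$ since all four terms vanish there a.e. For the potential, $\Re\langle V\Phi|\Psi\rangle=c\langle V\Phi|\Phi\rangle\geq cW\abs{\Phi}^2=W\abs{\Phi}\abs{\Psi}$ by the hypothesis $V_x\geq W_x\,\mathrm{id}$. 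Integrating both pointwise inequalities delivers the required form domination.

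\textbf{Main obstacle.} The hard technical core is the trio of Kato-type statements: the scalar Kato inequality, its polarised version for paired sections, and the $W^{1,2}$-regularity of $\Psi=(f/\abs{\Phi})\Phi$. All three are delicate on the zero set of $\Phi$ and must be obtained by regularising with $\abs{\Phi}_\epsilon$ and arguing in the limit. Once these are in hand the rest, in particular the verification of the generalized ideal axiom, is routine.
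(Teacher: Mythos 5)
Your proposal is correct and follows essentially the same route as the paper: regularize with $\abs{\Phi}_\epsilon=(\abs{\Phi}^2+\epsilon^2)^{1/2}$, use the covariant Kato inequality $\abs{d\abs{\Phi}_\epsilon}\leq\abs{\nabla\Phi}$ to get $\abs{\Phi}\in D(\E^{(N)}_W)$, verify the generalized ideal property via $\Psi_\epsilon=f\Phi/\abs{\Phi}_\epsilon$ and the product/chain rules, and obtain the form inequality from the polarized pointwise estimate in the limit $\epsilon\downto 0$. The only caution is that in your form-inequality step you differentiate $c=\abs{\Psi}/\abs{\Phi}$ directly, whose weak differentiability is exactly the delicate point; the paper sidesteps this by carrying the $\epsilon$-regularized section through the final inequality and passing to the limit only at the end, which is the safer way to execute the argument you sketch.
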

\begin{proof}
\emph{Step 1.} If $\Phi\in D( \E^{(N)}_{\nabla,W})$, then
$\abs{\Phi}\in D(\E^{(N)})$:

Let $\abs{\Phi}_\epsilon=(\abs{\Phi}^2+\epsilon^2)^{1/2}$. For
$\Phi\in \Gamma_c(M;E)$ it was shown in \cite{HSU80}, Section 2,
that $\abs{d\abs{\Phi}_\epsilon}\leq \abs{\nabla \Phi}$. Since
$\Gamma_c(\Omega;E)$ is dense in $W^{1,2}_c(\Omega;E)$ for
$\Omega\subset M$ open, relatively compact and the inequality is
local, we conclude that $\abs{d\abs{\Phi}_\epsilon}\leq \abs{\nabla
\Phi}$ for all $\Phi\in W^{1,2}(M;E)$.

Since $\abs{\Phi}_\epsilon\to\abs{\Phi}$ in $L^2_\loc(M)$, we can
use the lower semicontinuity of the energy integral to get
\begin{align*}
\int_{\Omega}\abs{d\abs{\Phi}}^2\,d\mu\leq\liminf_{\epsilon\searrow
0}\int_\Omega \abs{d\abs{\Phi}_\epsilon}^2\,du\leq \int_\Omega
\abs{\nabla \Phi}^2\,d\mu
\end{align*}
for all open, relatively compact $\Omega\subset M$. This inequality
implies $\abs{d\abs{\Phi}}\leq \abs{\nabla \Phi}\in L^2(M)$.

Finally, $V\abs{\Phi}^2\leq \langle W\Phi|\Phi\rangle\in L^2(M)$ by
assumption. Thus $d\abs{\Phi}\in D(\E^{(N)}_{V})$.

\emph{Step 2.} If $v\in D(\E^{(N)})$ and $\Phi\in D(
\E^{(N)}_{\nabla,W})$ with $0\leq v\leq\abs{\Phi}$, then $v\sgn
\Phi\in D( \E^{(N)}_{\nabla,W})$:

Let $\Psi_\epsilon=v\Phi/\abs{\Phi}_\epsilon$. Using
$\abs{\Phi}_\epsilon\geq \epsilon$ and the chain, rule we get
$d\frac
1{\abs{\Phi}_\epsilon}=-d\abs{\Phi_\epsilon}/\abs{\Phi}_\epsilon^2$.
The product rule implies
\begin{align*}
\nabla\frac{\Phi}{\abs{\Phi_\epsilon}}=\frac
1{\abs{\Phi}_\epsilon}\nabla
\Phi-\Phi\otimes\frac{d\abs{\Phi}_\epsilon}{\abs{\Phi}_\epsilon^2}.
\end{align*}
As $\abs{\Phi}_\epsilon\geq \epsilon$, the right-hand side is in
$L^2_\loc(M;E\otimes T^\ast_\IC M)$. Hence we can apply the product
rule a second time to get
\begin{align*}
\nabla\Psi_\epsilon=\frac {\Phi}{\abs{\Phi}_\epsilon}\otimes
dv+\frac{v}{\abs{\Phi}_\epsilon}\nabla \Phi-v\Phi\otimes
\frac{d\abs{\Phi}_\epsilon}{\abs{\Phi}_\epsilon^2}.
\end{align*}
Thus
\begin{align*}
\abs{\nabla\Psi_\epsilon}\leq
\abs{dv}\frac{\abs{\Phi}}{\abs{\Phi}_\epsilon}+\frac{v}{\abs{\Phi}}(\abs{\nabla\Phi}+\abs{d\abs{\Phi}_\epsilon})\leq
\abs{dv}+2\abs{\nabla\Phi}.
\end{align*}
Moreover,
\begin{align*}
\langle
W\Psi_\epsilon|\Psi_\epsilon\rangle=\frac{v^2}{\abs{\Phi}_\epsilon^2}\langle
W\Phi|\Phi\rangle\leq \langle W\Phi|\Phi\rangle.
\end{align*}
As in Step 1, a limiting argument gives $v\sgn \Phi\in D(
\E^{(N)}_{\nabla,W})$.

\emph{Step 3.} If $v,\Phi$ as in Step 2, then $\Re
\E^{(N)}_{\nabla,W}(\Phi,v\sgn \Phi)\geq
\E^{(N)}_{V}(\abs{\Phi},v)$:

Let $\Psi_\epsilon=v \Phi/\abs{\Phi}_\epsilon$ as in Step 2. By what
we have already established in Steps 1 and 2,
\begin{align*}
\Re\langle \nabla \Phi|\nabla \Psi_\epsilon\rangle&=\Re\left\langle \nabla \Phi\bigg|\frac{\Phi}{\abs{\Phi}_\epsilon}\otimes dv+\frac v{\abs{\Phi}_\epsilon}\nabla \Phi-v\Phi\otimes \frac{d\abs{\Phi}_\epsilon}{\abs{\Phi}_\epsilon^2}\right\rangle\\
&=\frac 1{\abs{\Phi}_\epsilon}\langle\Re \langle \nabla \Phi|\Phi\rangle|dv\rangle+\frac{v}{\abs{\Phi}_\epsilon}(\abs{\nabla \Phi}^2-\frac 1{\abs{\Phi}_\epsilon}\langle \Re \nabla \Phi|\Phi\rangle| d\abs{\Phi}_\epsilon\rangle)\\
&=\langle d\abs{\Phi}_\epsilon|dv\rangle+\frac {v}{\abs{\Phi}_\epsilon}(\abs{\nabla\Phi}^2-\abs{d\abs{\Phi}_\epsilon}^2)\\
&\geq \langle d\abs{\Phi}_\epsilon|dv\rangle.
\end{align*}
Furthermore,
\begin{align*}
\langle W\Phi|\Psi_\epsilon\rangle=\frac
{v}{\abs{\Phi}_\epsilon}\langle W\Phi|\Phi\rangle\geq \frac
{\abs{\Phi}}{\abs{\Phi}_\epsilon} V\abs{\Phi}v.
\end{align*}
Letting $\epsilon\to 0$ we obtain the desired inequality.
\end{proof}

\begin{remark}\label{kato-formal}
 a) A distributional  version
of Kato's inequality in this setting was first proven  by Hess,
Schrader, Uhlenbrock \cite{HSU80} (for compact manifolds and
vanishing potentials) based on arguments originally due to Kato
\cite{Kat72} for magnetic Schr\"odinger operators. Their
considerations do not  include discussion of domains of the
operators or forms and, therefore,
 do
not allow one to conclude domination. Our reasoning, which relies on
the same method, can be seen as a completion of their result.

b) For open manifolds, the same domination has been proven by
G\"uneysu (see \cite{Gue14}, proof of Proposition 2.2) for
\emph{Dirichlet boundary conditions} (and vanishing potentials)
using methods from stochastic analysis and the semigroup
characterization of domination. Of course,  both our result and the
result of \cite{Gue14} apply to  those  situations where Dirichlet-
and Neumann boundary conditions agree.
\end{remark}

Lemma \ref {covariant_inner_reg} and Proposition
\ref{domination_manifolds} ensure that the conditions of Corollary
\ref{uniqueness_smooth} are met, so that we obtain the following
main result of this section.

\begin{theorem}\label{uniqueness_Schrödinger_manifolds}
Let $(E,\nabla,W)$ be a regular Schrödinger bundle and $V\in
L^1_\loc(M)_+$ such that $W(x)\geq V(x)\mathrm{id}_x$ in the sense of quadratic
forms for a.e. $x\in M$. If $\E^{(N)}_V=\E^{(D)}_V$, then $
\E^{(N)}_{\nabla,W}= \E^{(D)}_{\nabla,W}$.
\end{theorem}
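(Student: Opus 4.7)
The plan is to apply Corollary \ref{uniqueness_smooth} with the bundle Schrödinger form $\a = \E^{(N)}_{\nabla,V}$ in the role of the dominated form, and the scalar Schrödinger form $\b = \E^{(N)}_W$ in the role of the dominating form. If the four hypotheses of that corollary can be verified, its conclusion asserts that $D(\a)\cap \Gamma_c(M;E)$ is a form core for $\a$. Since $\Gamma_c(M;E) \subset D(\a)$ (smooth compactly supported sections clearly satisfy the integrability requirements on $\nabla\Phi$ and $\langle V\Phi|\Phi\rangle$), this intersection equals $\Gamma_c(M;E)$. By the very definition of $\E^{(D)}_{\nabla,V}$ as the closure of the restriction of $\E^{(N)}_{\nabla,V}$ to $\Gamma_c(M;E)$, this core statement is precisely the desired equality $\E^{(N)}_{\nabla,V}=\E^{(D)}_{\nabla,V}$.

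The first three hypotheses of Corollary \ref{uniqueness_smooth} are essentially available off the shelf. Both $\a$ and $\b$ are closed (the scalar form $\E^{(N)}_W$ is a standard Dirichlet form, and closedness of $\E^{(N)}_{\nabla,V}$ was noted directly after its definition). Being a Dirichlet form, $\b$ satisfies the first Beurling--Deny criterion. Smooth inner regularity of $\a$ is exactly the content of Lemma \ref{covariant_inner_reg}, while domination of $\a$ by $\b$ is the statement of Proposition \ref{domination_manifolds}, into which the pointwise comparison $V_x\geq W_x$ feeds as the required hypothesis.

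The only remaining hypothesis—that $D(\b)\cap C_c^\infty(M)$ is a form core for $\b$—is where the assumption $\E^{(N)}_W=\E^{(D)}_W$ enters. By construction, $\E^{(D)}_W$ is the closure of the restriction of $\E^{(N)}_W$ to $C_c^\infty(M)$, so $C_c^\infty(M)$ is dense in $D(\E^{(D)}_W)$ with respect to $\norm\cdot_{\E^{(D)}_W}$. Under the assumed coincidence of Neumann and Dirichlet forms on the base, this is exactly the required form core condition for $\b$.

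There is essentially no technical obstacle once Corollary \ref{uniqueness_smooth}, Lemma \ref{covariant_inner_reg}, and Proposition \ref{domination_manifolds} are in place; the theorem is a direct assembly of these building blocks. The only mild care required is the bookkeeping of unfolding the definition of the $\E^{(D)}$ forms both in the hypothesis (for the scalar base) and in the conclusion (for the bundle), so as to match them with the abstract "form core" formulation in Corollary \ref{uniqueness_smooth}.
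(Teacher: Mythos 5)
Your proof is correct and is exactly the paper's argument: the theorem is obtained by feeding Lemma \ref{covariant_inner_reg} (smooth inner regularity) and Proposition \ref{domination_manifolds} (domination) into Corollary \ref{uniqueness_smooth}, with the hypothesis $\E^{(N)}_W=\E^{(D)}_W$ supplying the form-core condition for the scalar form on the base. The only slight inaccuracy is the parenthetical claim that $\Gamma_c(M;E)\subset D(\E^{(N)}_{\nabla,V})$ --- for $V$ merely in $L^1_{\loc}$ the condition $\langle V\Phi|\Phi\rangle\in L^2(M)$ may fail for a smooth compactly supported section --- but this is immaterial, since the corollary's conclusion that $D(\a)\cap\Gamma_c(M;E)$ is a form core is already, by the definition of $\E^{(D)}_{\nabla,V}$ as the closure of the restriction to $\Gamma_c(M;E)$, the desired equality.
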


Finally, we discuss how Theorem
\ref{uniqueness_Schrödinger_manifolds} enables us to apply capacity
estimates to form uniqueness problems for Schrödinger operators on
bundles. The scalar case has been treated in \cite{GM13}; we follow
their terminology.

\begin{definition}[Cauchy boundary, capacity]
Denote by $\tilde M$ the metric completion of $M$. The Cauchy boundary $\partial_C M$ of $M$ is defined as $\partial_C M=\tilde M\setminus M$. The capacity of an open subset $\Omega$ of $\tilde
M$ is defined as
\begin{align*}
\capty(\Omega)=\inf\{\norm{u}_{W^{1,2}(M)}^2\mid u\in
W^{1,2}(M),\,0\leq u\leq 1,\,u|_{\Omega\cap M}=1\}.
\end{align*}
As usual, the infimum of the empty set is taken to be $\infty$. The
capacity is extended to arbitrary subsets $\Sigma$ of $\Omega$
by setting
\begin{align*}
\capty(\Sigma)=\inf_{\Omega\supset\Sigma\text{ open}}\capty(\Omega).
\end{align*}
A subset $\Sigma$ of $\tilde M$ is called \emph{polar} if
$\capty(\Sigma)=0$.
\end{definition}

\begin{corollary}\label{capacity_estimates}
Consider the following assertions:
\begin{itemize}
\item[(i)] The Cauchy boundary $\partial_C M$ of $M$ is polar.
\item[(ii)] $\E^{(N)}=\E^{(D)}$.
\item[(iii)] $ \E^{(N)}_{\nabla,W}= \E^{(D)}_{\nabla,W}$ for all regular Schrödinger bundles $(E,\nabla,W)$.
\end{itemize}
Then (i)$\implies$(ii)$\iff$(iii). Moreover, if there exists an
exhaustion $(B_k)$ of $\tilde M$ such that $\capty (B_k\setminus
M)<\infty$ for all $k\in\IN$, then all three assertions are
equivalent.
\end{corollary}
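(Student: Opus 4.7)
The plan is to handle the equivalence (ii)$\iff$(iii) and the implication (i)$\implies$(ii) first, and then to address the reverse direction (ii)$\implies$(i) under the exhaustion hypothesis. The first equivalence is a direct consequence of the main theorem of this section. For (iii)$\implies$(ii), I specialize to the trivial line bundle $E = M\times\IC$ with $\nabla = d$ and $V = 0$, noting that then $\E^{(N)}_{\nabla,V} = \E^{(N)}$ and $\E^{(D)}_{\nabla,V} = \E^{(D)}$. For (ii)$\implies$(iii), given any regular Schr\"odinger bundle $(E,\nabla,V)$, I apply Theorem \ref{uniqueness_Schrödinger_manifolds} with $W = 0$: the inequality $V_x \geq 0$ is built into the definition of a regular Schr\"odinger bundle, and (ii) provides $\E^{(N)}_W = \E^{(D)}_W$, yielding $\E^{(N)}_{\nabla,V} = \E^{(D)}_{\nabla,V}$.

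For (i)$\implies$(ii) I would follow the scalar argument of \cite{GM13}. By the scalar case of Lemma \ref{covariant_inner_reg}, it suffices to prove that $W^{1,2}_c(M)$ is dense in $W^{1,2}(M) = D(\E^{(N)})$. Polarity of $\partial_C M$ produces, for every $\epsilon > 0$, an open set $\Omega_\epsilon \subset \tilde M$ with $\partial_C M \subset \Omega_\epsilon$ and a cut-off $\chi_\epsilon \in W^{1,2}(M)$ satisfying $0 \leq \chi_\epsilon \leq 1$, $\chi_\epsilon = 1$ on $\Omega_\epsilon \cap M$, and $\norm{\chi_\epsilon}_{W^{1,2}} < \epsilon$. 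Given $u \in W^{1,2}(M)$, first truncate $u$ to be bounded and multiply by a cut-off localized to a large open ball of $\tilde M$, reducing to $u$ bounded and supported in a bounded subset of $\tilde M$. The product $(1-\chi_\epsilon)u$ is then supported away from $\partial_C M$ inside a bounded region, hence has compact support in $M$; a standard Leibniz estimate, combined with the smallness of $\norm{\chi_\epsilon}_{W^{1,2}}$, yields $(1-\chi_\epsilon)u \to u$ in $W^{1,2}(M)$, proving density.

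For (ii)$\implies$(i) under the exhaustion hypothesis I would again follow \cite{GM13}. Since $\partial_C M = \bigcup_k (B_k \setminus M)$ and the capacity is countably subadditive (so countable unions of polar sets are polar), it suffices to show $\capty(B_k \setminus M) = 0$ for every $k$. Fix $k$ and pick a sequence of open neighborhoods $\Omega_n \supset B_k \setminus M$ in $\tilde M$ with $\capty(\Omega_n) \to \capty(B_k \setminus M) < \infty$, together with near-minimizers $u_n \in W^{1,2}(M)$ equal to $1$ on $\Omega_n \cap M$. Assumption (ii) forces each $u_n$ to lie in the $W^{1,2}$-closure of $C_c^\infty(M)$; using quasi-continuous representatives for the regular Dirichlet form $\E^{(N)}$, one then shows that any $W^{1,2}$-limit of such compactly supported functions must vanish quasi-everywhere on $\partial_C M$, which is incompatible with $u_n = 1$ on an open neighborhood of $B_k \setminus M$ unless $\capty(B_k \setminus M) = 0$. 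The main obstacle here is exactly this quasi-continuity step, which translates the abstract density $W^{1,2}(M) = \overline{C_c^\infty(M)}$ into vanishing of the concrete Sobolev capacity on the Cauchy boundary; the finiteness supplied by the exhaustion is precisely what ensures the relevant minimizers exist. This is the technical heart of \cite{GM13}, which I would invoke for the detailed estimates rather than reprove.
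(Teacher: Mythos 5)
Your proposal is correct and follows essentially the same route as the paper: (ii)$\implies$(iii) via Theorem \ref{uniqueness_Schrödinger_manifolds} with $W=0$, (iii)$\implies$(ii) by specializing to the trivial line bundle, and the scalar assertions concerning the Cauchy boundary delegated to \cite{GM13} (Theorem 1.7 and Lemma 2.2). The extra sketches you give of the \cite{GM13} arguments are plausible but not needed, since both you and the paper ultimately cite that reference for the technical content.
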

\begin{proof}
The implication (ii)$\implies$(iii) is content of the last theorem,
while (iii)$\implies$(ii) is trivial. The remaining statements
follow from \cite{GM13}, Theorem 1.7 and Lemma 2.2.
\end{proof}

\begin{remark}
It is an achievement of our abstract main theorem to make concepts
from potential theory such as the capacity in this corollary
directly applicable to Schr\"odinger forms, which are not Dirichlet
forms, without having to go through the uniqueness proof for the
scalar case again.
\end{remark}

\section{Applications to magnetic Schr\"odinger forms on graphs}\label{Applications-grph}
In this section we will study discrete analoga of the Laplacian
respectively magnetic Schr\"odinger operators in Euclidean space.
Analysis on graphs has been an active field of research in recent
years and uniqueness of extensions of operators respectively forms
on graphs have been intensively studied. We just point to
\cite{HKLW, HKMW,KL12,Woj08,Woj09} for non-magnetic forms and \cite{GKS16,MT14,MT15} for magnetic forms as a
few examples and refer to the recent survey \cite{Schm20} for further reference. The latter also discusses results from a preprint version of the present paper.

Compared to the Euclidean case, the discrete setting allows more
clarity in the presentation as some mere technical complications do
not appear. In particular, Corollary
\ref{corollary_uniqueness_forms} can be applied directly since
$L_c^\infty(X)$ and $C_c(X)$ coincide for discrete spaces.

We will start with some basic definitions, including those of
magnetic Schr\"odinger forms on graphs (Definitions
\ref{def_magnetic_graph_Neumann} and
\ref{def_magnetic_graph_Dirichlet}), essentially following
\cite{KL10},\cite{KL12} for graphs and Dirichlet forms over
discrete spaces and \cite{MT15} for vector bundles over graphs
and magnetic Schr\"odinger operators. Then we show that the form
with magnetic field is dominated by the form without magnetic field
(Proposition \ref{domination_magnetic_forms}) before we finally give
the uniqueness result (Theorem \ref{form_uniqueness_graphs}) and
discuss some examples.

\begin{definition}[Weighted graph]
A weighted graph $(X,b,m)$ consists of an (at most) countable set
$X$, an edge weight $b\colon X\times X\lra [0,\infty)$ and a measure $m\colon
X\lra(0,\infty)$, where $b$ satisfies
\begin{itemize}
\item[(b1)]$b(x,x)=0$ for all $x\in X$,
\item[(b2)]$b(x,y)=b(y,x)$ for all $x,y\in X$,
\item[(b3)]$\sum_{y\in X}b(x,y)<\infty$ for all $x\in X$.
\end{itemize}
\end{definition}
Observe that we do not assume our graphs to be locally finite, that
is, $\{y\in X\mid b(x,y)>0\}$ may be infinite as long the edge
weights are still summable (at each vertex). We shall regard $X$ as
a discrete topological space. Consequently, $C_c(X)$ is the space of
functions on $X$ with finite support.  We regard $m$ as a measure on
the power set  $\mathcal{P}(X)$ of $X$ via
\begin{align*}
m(A):=\sum_{x\in A} m(x),\;A\subset X,
\end{align*}
and denote the corresponding $L^2$-space by $\ell^2(X,m)$.  

Any weighted graph $(X,b,m)$ and a function $c\colon X\lra[0,\infty)$  come  with a formal Laplacian $\tilde L$ acting on
$$
\{f : X\longrightarrow \IR : \sum_{y} b(x,y) |f(y)| <\infty
\mbox{ for all $x\in X$}\}$$
by
$$\tilde L f (x) = \frac{1}{m(x)} \sum_{y} b(x,y) (f(x) - f(y))
+ \frac{c(x)}{m(x)} f(x)$$ for $x\in X$.

For the next definition recall that a Hermitian vector bundle over a discrete base space $X$ is just a family of isometrically isomorphic finite-dimensional Hilbert spaces indexed by $X$.
\begin{definition}[Unitary connection]
Let $X$ be a discrete space and $E\lra X$ a Hermitian vector bundle. A connection on $E$ is a family $\Phi=(\Phi_{x,y})_{x,y\in X}$ of unitary maps $\Phi_{x,y}\colon E_y\lra E_x$ such that $\Phi_{x,y}=\Phi_{y,x}^{-1}$.
\end{definition}
For a Hermitian vector bundle $E$ over a discrete space $X$ equipped with a measure $m$ we denote by
\begin{align*}
\Gamma(X;E)&=\{u\colon X\lra\prod_{x\in X}E_x\mid u(x)\in E_x\},\\
\Gamma_c(X;E)&=\{u\in\Gamma(X;E)\mid \supp u\text{ finite}\},\\
\ell^2(X,m;E)&=\{u\in\Gamma(X;E)\mid\sum_{x\in X}\langle u(x),u(x)\rangle_x m(x)<\infty\}
\end{align*}
the space of all sections, the space of all sections with compact support and the space of all $L^2$-sections. The latter becomes a Hilbert space equipped with the inner product
\begin{align*}
\langle \cdot,\cdot\rangle_{\ell^2(X,m;E)}\colon\ell^2(X,m;E)\times\ell^2(X,m;E)\lra\IC,\,(u,v)\mapsto\sum_{x\in X}\langle u(x),v(x)\rangle_x m(x).
\end{align*}
A bundle endomorphism $W$ of a Hermitian vector bundle $E$ over a discrete base space $X$ is a family $(W(x))_{x\in X}$ of linear maps $W(x)\colon E_x\lra E_x$.

For the remainder of the section, $(X,b,m)$ is a weighted graph,   $c\colon X\lra[0,\infty)$, 
$E$ a Hermitian vector bundle over $X$ with unitary connection
$\Phi$ and $W$ a bundle endomorphism of $E$ that is pointwise
positive, that is, $\langle W(x)v,v\rangle_x\geq 0$ for all $x\in
X,\,v\in E_x$.

\smallskip

Now we can define the basic object of our interest, the magnetic
Schr\"odinger form (with Dirichlet and Neumann boundary conditions).

\begin{definition}[Magnetic form with Neumann boundary conditions]\label{def_magnetic_graph_Neumann}
For $u\in \Gamma(X;E)$ let
\begin{align*}
\tilde Q_{\Phi,b,W}(u)=\frac 1 2\sum_{x,y}b(x,y)\abs{u(x)-\Phi_{x,y}u(y)}_x^2+\sum_{x}\langle W(x)u(x),u(x)\rangle_x\in[0,\infty].
\end{align*}
The magnetic Schr\"odinger form with Neumann boundary conditions is
defined via
\begin{align*}
D(Q^{(N)}_{\Phi,b,W})&=\{u\in\ell^2(X,m;F)\mid\tilde Q_{\Phi,b,W}(u)<\infty\},\\
Q^{(N)}_{\Phi,b,W}(u)&=\tilde Q_{\Phi,b,W}(u).
\end{align*}
\end{definition}

To shorten notation, we write $\norm{\cdot}_{\Phi,b,W}$ for the
form norm of $Q^{(N)}_{\Phi,b,W}$. By the same arguments as in the
Dirichlet form case, the form $Q^{(N)}_{\Phi,b,W}$ is closed (see
\cite{KL12}, Lemma 2.3).

\begin{definition}[Magnetic form with Dirichlet boundary conditions]\label{def_magnetic_graph_Dirichlet}
The magnetic Schr\"odinger form with Dirichlet boundary conditions
$Q^{(D)}_{\Phi,b,W}$ is the closure of the restriction of
$Q^{(N)}_{\Phi,b,W}$ to $C_c(X)$.
\end{definition}

If $E_x=\IC$ endowed with the standard inner product, the action of $W$ is multiplication by a function $c \colon X \lra [0,\infty)$. If, furthermore,  $\Phi_{x,y}=1$ for all $x,y\in X$ we will suppress $\Phi$ in the index and simply write $Q^{(D)}_{b,c}$ (resp. $Q^{(N)}_{b,c})$. We
may also drop other indices if they are clear from the context. The
interest in these forms is particularly a result of the fact that
$Q^{(D)}_{b,c}$ and $Q^{(N)}_{b,c}$ are Dirichlet forms. Indeed, all
regular Dirichlet forms over a discrete measure space are of the
form $Q^{(D)}_{b,c}$ for some graph $(X,b,m)$ and function  $c \colon X \lra [0,\infty)$ (cf. \cite{KL12},
Lemma 2.2). This is one motivation to study also graphs that are not
locally finite.

For our subsequent considerations we also note that the generators
of  both $Q^{(D)}_{b,c}$ and $Q^{(N)}_{b,c}$ are restrictions of
$\tilde L$ (see \cite{HKLW}). So, if $\tilde{L}$  has only one self-adjoint realization, then $Q^{(D)}_{b,c} = Q^{(N)}_{b,c}$ follows. In particular this is the case if the restriction
$$L_0 := \tilde L |_{C_c (X)}$$
of $\tilde L$ to $C_c (X)$ maps into $\ell^2 (X,m)$ and is essentially self-adjoint.

\smallskip

As a next step to establish criteria for $Q^{(N)}_\Phi=Q^{(D)}_\Phi$
we will show that the form with magnetic field is dominated by the
non-magnetic form. First we prove an easy technical lemma.

\begin{lemma}\label{inequality_sgn}
Let $V$ be a Hilbert space, $a,b\in V$, and $\alpha,\beta\geq 0$ with $\alpha\leq\norm{a},\,\beta\leq\norm{b}$. Define
\begin{align*}
\tilde a=\begin{cases}\frac{\alpha}{\norm a}a&\colon a\neq 0\\0&\colon a=0\end{cases}
\end{align*}
and likewise $\tilde b$. Then
\begin{align*}
\norm{\tilde a-\tilde b}^2\leq\abs{\alpha-\beta}^2+\norm{a-b}^2.
\end{align*}
\end{lemma}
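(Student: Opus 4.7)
The plan is to reduce to the case where both $a$ and $b$ are nonzero, then expand both sides in terms of inner products, and finally rearrange algebraically so that the difference of the two sides becomes a manifestly nonnegative expression.

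First I would dispose of the degenerate cases. If $a=0$, then the hypothesis $\alpha \leq \|a\|=0$ forces $\alpha=0$, so $\tilde a =0$ and the inequality reduces to $\|\tilde b\|^2 = \beta^2 \leq \beta^2 + \|b\|^2 = |\alpha-\beta|^2 + \|a-b\|^2$, which is trivial; the case $b=0$ is symmetric. Hence I may assume $a,b\neq 0$ and set $\hat a = a/\|a\|$, $\hat b = b/\|b\|$, so that $\tilde a = \alpha \hat a$ and $\tilde b = \beta \hat b$.

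Next I would expand both sides using $\|x-y\|^2 = \|x\|^2+\|y\|^2 - 2\Re\langle x,y\rangle$. Writing $t = \Re\langle \hat a,\hat b\rangle$, the left-hand side equals $\alpha^2 + \beta^2 - 2\alpha\beta t$, whereas the right-hand side equals $(\alpha-\beta)^2 + \|a\|^2 + \|b\|^2 - 2\|a\|\|b\| t$. Subtracting and grouping terms carefully, the difference (right minus left) becomes
\begin{align*}
(\|a\|-\|b\|)^2 + 2\bigl(\|a\|\|b\| - \alpha\beta\bigr)(1-t).
\end{align*}
Both summands are nonnegative: the first is a square, while for the second one has $1 - t \geq 0$ since $|t|\leq 1$ by Cauchy--Schwarz, and $\|a\|\|b\| - \alpha\beta \geq 0$ by the hypotheses $\alpha \leq \|a\|$, $\beta \leq \|b\|$. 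This yields the claim.

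No step is really an obstacle; the only mild difficulty is bookkeeping in the algebraic rearrangement. The key idea worth highlighting is that the bound decouples into the one-dimensional part $(\|a\|-\|b\|)^2$ (which dominates $|\alpha-\beta|^2$ thanks to $0\leq \alpha\leq\|a\|$, $0\leq \beta\leq\|b\|$) and the angular part $2(\|a\|\|b\|-\alpha\beta)(1-t)$, so that truncating the norms can only decrease distances.
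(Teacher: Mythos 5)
Your argument is correct and is essentially the paper's own proof in different packaging: the paper expands $\norm{\tilde a-\tilde b}^2=\abs{\alpha-\beta}^2+2\alpha\beta(1-t)$ and then bounds $2\alpha\beta(1-t)\leq 2\norm{a}\norm{b}(1-t)\leq\norm{a-b}^2$ via Cauchy--Schwarz and $2\lambda\mu\leq\lambda^2+\mu^2$, which is exactly your identity $\mathrm{RHS}-\mathrm{LHS}=(\norm{a}-\norm{b})^2+2(\norm{a}\norm{b}-\alpha\beta)(1-t)$ read as a chain of inequalities. One small caveat: your closing aside that $(\norm{a}-\norm{b})^2$ dominates $\abs{\alpha-\beta}^2$ is false (take $\norm{a}=\norm{b}=1$, $\alpha=1$, $\beta=0$), but the proof itself never uses this claim.
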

\begin{proof}
If $a=0$ or $b=0$, the inequality is obvious. Hence assume that $a,b\neq 0$.\\
In the following computation we use the inequality $2\lambda\mu\leq\lambda^2+\mu^2$ for $\lambda,\mu\in\IR$.
\begin{align*}
\norm{\tilde a-\tilde b}^2&=\norm{\tilde a}^2+\norm{\tilde b}^2-2\Re\langle\tilde a,\tilde b\rangle\\
&=\alpha^2+\beta ^2-2\Re\langle \tilde a,\tilde b\rangle\\
&=\abs{\alpha-\beta}^2+2\alpha\beta-2\Re \langle\tilde a,\tilde b\rangle\\
&=\abs{\alpha-\beta}^2+2\frac{\alpha\beta}{\norm a\norm b} (\norm a\norm b-\Re\langle a,b\rangle)\\
&\leq \abs{\alpha-\beta}^2+ 2\norm{a}\norm{b}-2\Re\langle a,b\rangle\\
&\leq \abs{\alpha-\beta}^2+\norm{a}^2+\norm{b}^2-2\Re\langle a,b\rangle\\
&=\abs{\alpha-\beta}^2+\norm{a-b}^2\qedhere
\end{align*}
\end{proof}

We will now prove that the magnetic form is dominated by the form
without magnetic field. We note that a pointwise version of Kato's inequality
was already given in \cite{MT15}, Lemma 3.3. This work does not
involve a discussion of validity of this inequality on the domains
of the operators. Hence, it can not be used to conclude domination.
In this sense our result can be seen as a completion of their result
(compare the remark below Proposition \ref{domination_manifolds} for a discussion of a similar
issue as well).

\begin{proposition}\label{domination_magnetic_forms}
Assume that $W(x)\geq c(x)\mathrm{id}_x$ in the sense of quadratic forms for all $x\in X$. Then $Q^{(N)}_{\Phi,b,W}$ is dominated by $Q^{(N)}_{b,c}$.
\end{proposition}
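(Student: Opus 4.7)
The absolute pairing here is the pointwise norm $|u|(x)=|u(x)|_x$ on the bundle, and two sections $\xi,\eta$ are paired exactly when the pointwise Cauchy--Schwarz inequality $\langle\xi(x),\eta(x)\rangle_x\le|\xi(x)|_x|\eta(x)|_x$ is an equality $m$-a.e., equivalently, when on the support of $\xi$ we have $\eta(x)=(|\eta(x)|/|\xi(x)|)\xi(x)$. The plan is to verify the two defining conditions of domination from Section \ref{chap_positivity}: that $D(Q^{(N)}_{\Phi,b,W})$ is a generalized ideal of $D(Q^{(N)}_{b,c})$, and that the form inequality holds on paired pairs.

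For the generalized ideal property, I would first show that $\xi\in D(Q^{(N)}_{\Phi,b,W})$ implies $|\xi|\in D(Q^{(N)}_{b,c})$. The reverse triangle inequality combined with the fact that each $\Phi_{x,y}$ is unitary gives the pointwise bound
\begin{align*}
\bigl||\xi(x)|_x-|\xi(y)|_y\bigr|=\bigl||\xi(x)|_x-|\Phi_{x,y}\xi(y)|_x\bigr|\le|\xi(x)-\Phi_{x,y}\xi(y)|_x,
\end{align*}
and the pointwise bound $c(x)|\xi(x)|_x^2\le\langle W(x)\xi(x),\xi(x)\rangle_x$ follows from the assumption. Summing yields $Q^{(N)}_{b,c}(|\xi|)\le Q^{(N)}_{\Phi,b,W}(\xi)<\infty$.

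For the second half of the generalized ideal condition, given $\xi\in D(Q^{(N)}_{\Phi,b,W})$ and $f\in D(Q^{(N)}_{b,c})$ with $0\le f\le|\xi|$, I set $\eta(x):=(f(x)/|\xi(x)|_x)\xi(x)$ where $\xi(x)\neq0$ and $\eta(x):=0$ otherwise. Then $|\eta|=f$ and $\xi,\eta$ are paired by construction. To show $\eta\in D(Q^{(N)}_{\Phi,b,W})$, I would apply Lemma \ref{inequality_sgn} fiberwise with $a=\xi(x)$, $b=\Phi_{x,y}\xi(y)$, $\alpha=f(x)$, $\beta=f(y)$; since $\Phi_{x,y}$ is unitary, $\tilde a=\eta(x)$ and $\tilde b=\Phi_{x,y}\eta(y)$, and the lemma gives
\begin{align*}
|\eta(x)-\Phi_{x,y}\eta(y)|_x^2\le|f(x)-f(y)|^2+|\xi(x)-\Phi_{x,y}\xi(y)|_x^2.
\end{align*}
Together with $\langle W(x)\eta(x),\eta(x)\rangle_x=(f(x)^2/|\xi(x)|_x^2)\langle W(x)\xi(x),\xi(x)\rangle_x\le\langle W(x)\xi(x),\xi(x)\rangle_x$, summation gives $Q^{(N)}_{\Phi,b,W}(\eta)\le Q^{(N)}_{b,c}(f)+Q^{(N)}_{\Phi,b,W}(\xi)<\infty$.

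Finally, for the domination inequality on paired $\xi,\eta\in D(Q^{(N)}_{\Phi,b,W})$, the pairing condition forces $\langle\xi(x),\eta(x)\rangle_x=|\xi(x)|_x|\eta(x)|_x$ pointwise, and unitarity gives $\langle\Phi_{x,y}\xi(y),\Phi_{x,y}\eta(y)\rangle_x=|\xi(y)|_y|\eta(y)|_y$. Expanding the inner product and applying pointwise Cauchy--Schwarz to the mixed terms yields
\begin{align*}
\Re\langle\xi(x)-\Phi_{x,y}\xi(y),\eta(x)-\Phi_{x,y}\eta(y)\rangle_x\ge\bigl(|\xi(x)|_x-|\xi(y)|_y\bigr)\bigl(|\eta(x)|_x-|\eta(y)|_y\bigr).
\end{align*}
For the potential term, pairing implies $\eta(x)$ is a nonnegative scalar multiple of $\xi(x)$ on $\{\xi\neq0\}$, so $\Re\langle W(x)\xi(x),\eta(x)\rangle_x=(|\eta(x)|/|\xi(x)|)\langle W(x)\xi(x),\xi(x)\rangle_x\ge c(x)|\xi(x)|_x|\eta(x)|_x$. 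Summing the two estimates gives exactly $\Re Q^{(N)}_{\Phi,b,W}(\xi,\eta)\ge Q^{(N)}_{b,c}(|\xi|,|\eta|)$. The main technical obstacle is the second half of the ideal condition: the pointwise construction of $\eta$ is natural, but verifying $\eta$ is in the form domain is exactly what Lemma \ref{inequality_sgn} was designed for, so the pieces fit together cleanly.
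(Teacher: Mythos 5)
Your proposal is correct and follows essentially the same route as the paper: the reverse triangle inequality plus unitarity for $\abs{\xi}\in D(Q^{(N)}_{b,c})$, Lemma \ref{inequality_sgn} applied fiberwise with $a=\xi(x)$, $b=\Phi_{x,y}\xi(y)$ for the generalized ideal property, and the pointwise expansion with Cauchy--Schwarz on the mixed terms for the domination inequality. You are in fact slightly more complete than the paper in the final step, where you explicitly verify $\Re\langle W(x)\xi(x),\eta(x)\rangle_x\geq c(x)\abs{\xi(x)}_x\abs{\eta(x)}_x$ for paired sections, a point the paper leaves implicit.
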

\begin{proof}
We have to show that
$D(Q^{(N)}_{\Phi,b,W})$ is a generalized ideal in $D(Q^{(N)}_{b,c})$
and that
\begin{align*}
\Re Q^{(N)}_{\Phi,b,W}(u,\tilde u)\geq Q^{(N)}_{b,c}(\abs u,\abs {\tilde u})
\end{align*}
holds for all $u,\tilde u\in D(Q^{(N)}_{\Phi,b,W})$ satisfying $\langle u(x),\tilde u(x)\rangle_x=\abs{u(x)}\abs{\tilde u(x)}$ for all $x\in X$.\\
First, let $u\in D(Q^{(N)}_{\Phi,b,W})$. Then $\abs u\in\ell^2(X,m)$ and
\begin{align*}
\tilde Q_{\Phi,b,W}(u)&=\frac 1 2\sum_{x,y}b(x,y)\abs{u(x)-\Phi_{x,y}u(y)}^2+\sum_x\langle W(x)u(x),u(x)\rangle\\
&\geq \frac 1 2\sum_{x,y}b(x,y)\abs{\abs{u(x)}-\abs{u(y)}}^2+\sum_x c(x)\abs{u(x)}^2\\
&=\tilde Q_{b,c}(\abs u),
\end{align*}
hence $\abs u\in D(Q^{(N)}_{b,c})$.\\
Next let $v\in D(Q^{(N)}_{b,c})$ with $0\leq v\leq\abs u$. Obviously, $\norm{v\sgn u}_{\ell^2}\leq\norm v_{\ell^2}$, thus $v\sgn u\in\ell^2(X,m;F)$.\\
Applying Lemma \ref{inequality_sgn} to $V=E_x,a=u(x),b=\Phi_{x,y}u(y),\alpha=v(x),\beta=v(y)$, we obtain
\begin{align*}
\abs{v(x)\sgn u(x)-\Phi_{x,y}v(y)\sgn u(y)}^2\leq\abs{v(x)-v(y)}^2+\abs{u(x)-\Phi_{x,y}u(y)}^2.
\end{align*}
Summation over $x,y$ implies
\begin{align*}
\tilde Q_{\Phi,b,0}(v\sgn u)\leq  Q^{(N)}_{b,0}(v)+Q^{(N)}_{\Phi,b,0}(u).
\end{align*}
Furthermore,
\begin{align*}
\sum_x\langle W(x)v(x)\sgn u(x),v(x)\sgn u(x)\rangle&\leq\sum_x \abs{u(x)}^2\langle{W(x)\sgn u(x),\sgn u(x)}\rangle\\
&=\sum_x \langle W(x)u(x),u(x)\rangle,
\end{align*}
hence
\begin{align*}
\tilde Q_{\Phi,b,W}(v\sgn u)\leq Q^{(N)}_{b,0}(v)+Q^{(N)}_{\Phi,b,0}(u)+\sum_x c(x)\abs{u(x)}^2\leq Q^{(N)}_{b,c}(v)+Q^{(N)}_{\Phi,b,W}(u),
\end{align*}
that is, $v\sgn u\in D(Q^{(N)}_{\Phi,b,W})$.\\
Let $u,\tilde u\in D(Q^{(N)}_{\Phi,b,W})$ such that $\langle u(x),\tilde u(x)\rangle_x=\abs{u(x)}\abs{\tilde u(x)}$ for all $x\in X$. Then we have
\begin{align*}
&\Re\langle u(x)-\Phi_{x,y}u(y),\tilde u(x)-\Phi_{x,y}\tilde u(x)\rangle\\
={}&\Re(\langle u(x),\tilde u(x)\rangle-\langle u(x),\Phi_{x,y}\tilde u(y)\rangle-\langle\Phi_{x,y} u(y),\tilde u(x)\rangle+\langle u(y),\tilde u(y)\rangle)\\
={}&\abs{u(x)}\abs{\tilde u(x)}+\abs{u(y)}\abs{\tilde u(y)}-\Re\langle u(x),\Phi_{x,y}\tilde u(y)\rangle-\Re\langle\Phi_{x,y} u(y),\tilde u(x)\rangle\\
\geq{}& \abs{u(x)}\abs{\tilde u(x)}+\abs{u(y)}\abs{\tilde u(y)}-\abs{u(x)}\abs{\tilde u(y)}+\abs{u(y)}\abs{\tilde u(x)}\\
={}&(\abs{u(x)}-\abs{u(y)})(\abs{\tilde u(x)}-\abs{\tilde u(y)}).
\end{align*}
After multiplication with $b(x,y)$ and summation over $x,y\in X$ we get
\begin{align*}
\Re Q^{(N)}_{\Phi,b, W}(u,\tilde u)\geq Q^{(N)}_{b,c}(\abs u,\abs{\tilde u}).&\qedhere
\end{align*}
\end{proof}

\begin{corollary}
The form $Q_{b,c}^{(N)}$
is dominated by $Q^{(N)}_{b,0}$.
\end{corollary}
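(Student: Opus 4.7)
The plan is to deduce this as a direct specialization of Proposition \ref{domination_magnetic_forms}. In that proposition, the dominating form is allowed to have an arbitrary killing term $c$ as long as the endomorphism $W$ on the bundle pointwise dominates $c\,\mathrm{id}$; setting the killing term of the dominating form to $0$ reduces the only hypothesis to pointwise positivity of $W$, which is automatic.

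Concretely, I would take $E$ to be the trivial Hermitian line bundle $X\times\IC\lra X$ (so each fiber $E_x=\IC$ with its standard inner product), choose the trivial unitary connection $\Phi_{x,y}=1$, and let the bundle endomorphism be the multiplication $W(x)=c(x)\in\End(E_x)$. With these choices the magnetic Schrödinger form $Q^{(N)}_{\Phi,b,W}$ becomes, by definition, the scalar form $Q^{(N)}_{b,c}$, and we aim to dominate it by $Q^{(N)}_{b,0}$, i.e., by taking the killing term $c'=0$ in the role of the dominating form's scalar potential.

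The single hypothesis to verify is the pointwise inequality in Proposition \ref{domination_magnetic_forms}, which in our setting reads
\begin{align*}
\langle W(x)u(x),u(x)\rangle_x = c(x)\abs{u(x)}^2 \geq 0 \cdot \abs{u(x)}^2
\end{align*}
for all $x\in X$ and $u(x)\in E_x$. This holds since $c(x)\geq 0$ by the definition of a weighted graph. Proposition \ref{domination_magnetic_forms} then yields that $Q^{(N)}_{\Phi,b,W}=Q^{(N)}_{b,c}$ is dominated by $Q^{(N)}_{b,0}$, which is the claim.

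There is no real obstacle here; the only point requiring a moment's care is the bookkeeping of the roles of $W$ and $c$ in the proposition versus the statement of the corollary. The scalar killing term $c$ of our graph plays the role of the endomorphism $W$ in the proposition, while the dominating form is obtained by taking the killing term there to be $0$.
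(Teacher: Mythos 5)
Your proof is correct and is essentially the paper's own argument: both specialize Proposition \ref{domination_magnetic_forms} to the trivial line bundle with $\Phi_{x,y}=1$, $W(x)=c(x)$, and killing term $0$ for the dominating form, the hypothesis reducing to $c(x)\geq 0$. Your version merely makes explicit the bookkeeping (dominating killing term set to $0$) that the paper leaves implicit.
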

\begin{proof}
This follows from Proposition \ref{domination_magnetic_forms} by
taking $E_x=\IC$, $W(x)=c(x)$ and $\Phi_{x,y}=1$ for all $x,y\in X$.
\end{proof}

Having proven the domination property, the announced main result of
this section is now an easy consequence of Corollary
\ref{corollary_uniqueness_forms}. In a very informal way it says
that adding a magnetic and electric field does not disturb the form
uniqueness.

\begin{theorem}\label{form_uniqueness_graphs}
Assume that $W(x)\geq c(x)\mathrm{id}_x$ in the sense of quadratic forms for
all $x\in X$. If $Q^{(D)}_{b,c}=Q^{(N)}_{b,c}$, then
$Q^{(D)}_{\Phi,b,W}=Q^{(N)}_{\Phi,b,W}$.
\end{theorem}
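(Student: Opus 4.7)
The plan is to deduce the theorem by a direct application of Corollary \ref{corollary_uniqueness_forms} with $\b=Q^{(N)}_{b,c}$ and $\a=Q^{(N)}_{\Phi,b,W}$, taking $X$ equipped with the discrete topology (which is Lindelöf since $X$ is at most countable), $\mu=m$, and $E$ the given Hermitian vector bundle. The key point is that on a discrete base space, $L^2_c(X,m) = C_c(X)$ and $L^2_c(X,m;E) = \Gamma_c(X;E)$, since any finitely supported function or section automatically lies in $\ell^2$. So the conclusion of the corollary for $\a$ will read exactly that $\Gamma_c(X;E)$ is a form core for $Q^{(N)}_{\Phi,b,W}$, which by the very definition of $Q^{(D)}_{\Phi,b,W}$ as the closure of $Q^{(N)}_{\Phi,b,W}|_{\Gamma_c(X;E)}$ is the statement $Q^{(D)}_{\Phi,b,W}=Q^{(N)}_{\Phi,b,W}$.

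To apply the corollary I need to verify its hypotheses. First, $Q^{(N)}_{b,c}$ is closed and a Dirichlet form (as recalled after Definition \ref{def_magnetic_graph_Dirichlet}), so in particular it satisfies the first Beurling--Deny criterion. Next, $Q^{(N)}_{\Phi,b,W}$ is closed and, by Proposition \ref{domination_magnetic_forms} together with the hypothesis $\langle W(x)u(x),u(x)\rangle_x\geq c(x)\abs{u(x)}^2$, is dominated by $Q^{(N)}_{b,c}$. Finally I need $D(Q^{(N)}_{b,c})\cap L^2_c(X,m)=C_c(X)$ to be a form core for $Q^{(N)}_{b,c}$; but $C_c(X)$ is a form core for the closure of its restriction $Q^{(D)}_{b,c}$ by definition, and the assumed equality $Q^{(D)}_{b,c}=Q^{(N)}_{b,c}$ immediately turns this into the desired core property. (I should also briefly note that finitely supported sections indeed belong to $D(Q^{(N)}_{\Phi,b,W})$, so that $\Gamma_c(X;E)\subset D(Q^{(N)}_{\Phi,b,W})\cap L^2_c(X,m;E)$ and equality holds; this follows from $\sum_y b(x,y)<\infty$ at each vertex, which makes the double sum in the definition of $\tilde Q_{\Phi,b,W}$ finite on any finitely supported section.)

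Since all hypotheses of Corollary \ref{corollary_uniqueness_forms} are met, its conclusion gives that $\Gamma_c(X;E)$ is a form core for $Q^{(N)}_{\Phi,b,W}$, which is exactly $Q^{(D)}_{\Phi,b,W}=Q^{(N)}_{\Phi,b,W}$. There is essentially no obstacle beyond collecting the pieces; the entire content of the theorem is packaged into the earlier abstract machinery (the uniqueness transfer of Theorem \ref{uniqueness_forms}/Corollary \ref{corollary_uniqueness_forms}) and the concrete domination statement (Proposition \ref{domination_magnetic_forms}). The only mild subtlety worth highlighting in the write-up is the identification of the discrete analogues of $L^2_c$ with the spaces of finitely supported functions/sections, which is what allows the abstract corollary to apply so cleanly in this setting.
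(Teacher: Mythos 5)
Your proposal is correct and follows exactly the paper's own argument: the theorem is obtained by combining Proposition \ref{domination_magnetic_forms} with Corollary \ref{corollary_uniqueness_forms} applied to $\a=Q^{(N)}_{\Phi,b,W}$ and $\b=Q^{(N)}_{b,c}$. The extra details you supply (the identification of $L^2_c$ with the finitely supported functions/sections in the discrete setting, and the verification that $\Gamma_c(X;E)\subset D(Q^{(N)}_{\Phi,b,W})$) are consistent with the remarks the paper makes at the start of Section \ref{Applications-grph}.
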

\begin{proof}
We have proven in Proposition \ref{domination_magnetic_forms} that
$Q^{(N)}_{\Phi,b,W}$ is dominated by $Q^{(N)}_{b,c}$. An application
of Corollary \ref{corollary_uniqueness_forms} for $\a=Q^{(N)}_{\Phi,b,W}$ and $\b=Q^{(N)}_{b,c}$ yields the
claim.
\end{proof}

To apply the theorem, we need $Q^{(D)}_{b,c}=Q^{(N)}_{b,c}$.  There
are quite a few conditions under which this equality  holds. The
first were phrased in terms of the measure $m$ and the combinatorial
graph structure:

\begin{example}[See \cite{KL12} for details and proofs]
If $\tilde L C_c(X)\subset \ell^2(X,m)$ and $\sum_{n=1}^\infty
m(x_n)=\infty$ for any sequence $(x_n)$ in $X$ such that
$b(x_n,x_{n+1})>0$ for all $n\in \IN$, then $L_0$ is essentially
self-adjoint and all form extensions of $Q^{(D)}$ coincide with
$Q^{(D)}$. The given conditions are in particular satisfied if
$\inf_{x\in X}m(x)>0$.
\end{example}

It turns out that the concept of intrinsic pseudo metrics
(introduced for general not necessarily local Dirichlet forms in
\cite{FLW}) provides a suitable framework for conditions for
uniqueness of form extensions. Here, a pseudo metric $d\colon
X\times X\lra [0,\infty)$ is called intrinsic for $(X,b,m)$ if
\begin{align*}
\frac 1{m(x)}\sum_ y b(x,y) d(x,y)^2\leq 1
\end{align*}
for all $x\in X$.

A pseudo metric $d$  is called a path pseudo
metric if there is a function $\sigma\colon X\times
X\lra[0,\infty)$, satisfying $\sigma(x,y)=\sigma(y,x)$ and
$\sigma(x,y)>0$ iff $b(x,y)>0$ for all $x,y\in X$, such that
\begin{align*}
d(x,y)=d_\sigma(x,y):=\inf_{\gamma}\sum_{k=1}^n \sigma(x_{k-1},x_k)
\end{align*}
where the infimum is taken over all paths $(x_0,\ldots,x_n)$
connecting $x$ and $y$.

The results discussed in the next two examples are taken from \cite{GKS16}, Theorem~2.16. Under slightly stronger assumptions they were first established in \cite{HKMW}, Corollary~1 and Theorem~2. Further examples can be found there and in the recent survey \cite{Schm20}.

\begin{example}\label{example:finite distance balls} Let $d$ be an intrinsic pseudo metric on $(X,b,m)$ and $c \colon X \lra [0,\infty)$. If balls with respect to $d$ are finite, then $\tilde L$ has a unique self-adjoint realization, and consequently $Q_{b,c}^{(D)}=Q_{b,c}^{(N)}$. 
\end{example}

\begin{remark} The previous example shows the strength of our method as form uniqueness on bundles under the condition of finite intrinsic balls has not been treated in earlier works. Notice that in general we have only $Q^{(D)}=Q^{(N)}$ and not the stronger assertion that $L_0$ is essentially self-adjoint.
\end{remark}

On locally finite graphs path metrics satisfy a discrete Hopf-Rinow theorem (see \cite{HKMW}, Theorem~A.1) and the formal Laplacian $\tilde L$ maps $C_c(X)$ to $\ell^2(X,m)$.  In this case the previous example can therefore be restated as follows (see \cite{GKS16}, Theorem~2.16 for details). 

\begin{example}
If $(X,b,m)$ is locally finite and there is an intrinsic path 
metric $d$ such that $(X,d)$ is metrically complete, then $L_0$ is
essentially self-adjoint, and consequently $Q_{b,c}^{(D)}=Q_{b,c}^{(N)}$.
\end{example}


There is also a non-geometric version for the condition in these examples. In some recent works form uniqueness respectively essential self-adjointness is inferred from  two similar assumptions called {\em completeness} and {\em $\chi$-completeness}. In \cite{HL15} a graph $(X,b,m)$ is called \emph{complete} if there is a non-decreasing sequence $(\eta_k)$ in $C_c(X)$ such that $\eta_k\to 1$ pointwise and  the norm of the discrete gradient satisfies
\begin{align*}
\frac 1{m(x)}\sum_y b(x,y)\abs{\eta_k(x)-\eta_k(y)}^2 \to 0, \quad k \to \infty,
\end{align*}
uniformly in $x \in X$. In \cite{AT15} for locally finite graphs $\chi$-completeness is introduced as above with the uniform convergence of the norm of the discrete gradient to $0$ replaced by uniform boundedness.  These concepts were coined after a preprint of \cite{FLW} was published, but it is one of the main technical insights thereof that both are implied by the existence of an intrinsic metric with finite balls, see \cite{FLW} Proposition~4.7 in the context of regular Dirichlet forms or \cite{HL15} Theorem~2.1 for a discrete version.

The discussion in  \cite{AT15,HL15} suggest that ($\chi$-) completeness is a less restrictive condition than the existence of an intrinsic metric with finite distance balls, but this is not the case. To fill this gap in the literature we give a proof for the equivalence of completeness and the existence of an intrinsic metric with finite distance balls in Appendix~\ref{appendix}. Moreover, we show that both are equivalent to $\chi$-completeness provided the graph is locally finite. In summary,  completeness and $\chi$-completeness are point free versions of the condition in Example~\ref{example:finite distance balls} that ensures $Q_{b,c}^{(D)}=Q_{b,c}^{(N)}$.

\appendix

\section{Completeness of graphs and intrinsic metrics} \label{appendix}

For the definitions used in this Appendix we refer to Section~\ref{Applications-grph}. Given a graph $(X,b,m)$ and $f \colon X \to \IC$ we write
$$\Gamma(f)(x) = \frac{1}{m(x)} \sum_{y} b(x,y) |f(x) - f(y)|^2. $$
Moreover, by $C_0(X)$ we denote the closure  of $C_c(X)$ in $\ell^\infty(X)$.
 
 \begin{theorem}
For a graph $(X,b,m)$ the following assertions are equivalent.
\begin{itemize}
 \item[(i)] $(X,b,m)$ admits an intrinsic pseudo metric with finite balls. 
 \item[(ii)] $(X,b,m)$ is complete, i.e., there exists a non-decreasing sequence $(\eta_k)$ in $C_c(X)$ such that $\eta_k\to 1$ pointwise and $\Gamma(\eta_k)\to 0$ uniformly as $k\to\infty$.
\end{itemize}
 If, additionally, for every $y \in X$ the function $X \to \IR, \, x \mapsto b(x,y)/m(x)$ belongs to $C_0(X)$, then these assertions are equivalent to the following.
 \begin{itemize}
  \item[(iii)]  $(X,b,m)$ is $\chi$-complete, i.e., there exists a non-decreasing sequence $(\eta_k)$ in $C_c(X)$ such that $\eta_k\to 1$ pointwise and $(\Gamma(\eta_k))$ is uniformly bounded.
 \end{itemize}
\end{theorem}
\begin{proof}
(i) $\Longrightarrow$ (ii)/(iii): If $d$ is an intrinsic pseudo metric with finite balls, then we can take $\eta_k=(1-d(\cdot,B_k(o))/k)_+$, see e.g. \cite{HL15}, Theorem~2.8.

(ii) $\Longrightarrow$ (i): Assume that $(X,b,m)$ is complete and let $(\eta_k)$ be a sequence as in the definition of completeness. Since $\eta_k\to 1$ pointwise and $X$ is at most countable, we can assume w.l.o.g. that
\begin{align*}
\sum_{k=1}^\infty (1-\eta_k(x))^2<\infty
\end{align*}
for all $x\in X$. 
Otherwise taking a subsequence, we can additionally assume
\begin{align*}
\sum_{k=1}^\infty \norm{\Gamma(\eta_k)}_\infty\leq 1.
\end{align*}
For $x,y\in X$ let
\begin{align*}
d(x,y)=\left(\sum_{k=1}^\infty (\eta_k(x)-\eta_k(y))^2\right)^{1/2}.
\end{align*}
First note that $d(x,y)$ is finite:
\begin{align*}
d(x,y)\leq \left(\sum_{k=1}^\infty(1-\eta_k(x))^2\right)^{1/2}+\left(\sum_{k=1}^\infty(1-\eta_k(y))^2\right)^{1/2}<\infty.
\end{align*}
It is then clear that $d$ is a pseudo metric.

It is moreover intrinsic:
\begin{align*}
\frac  1{m(x)}\sum_y b(x,y) d(x,y)^2&=\frac 1{m(x)}\sum_y b(x,y)\sum_{k=1}^\infty (\eta_k(x)-\eta_k(y))^2\\
&=\sum_{k=1}^\infty\Gamma(\eta_k)(x)\\
&\leq 1.
\end{align*}
Finally, $d$ has finite balls. Indeed, let $x\in X$ and $i\in\IN$ with $\eta_i(x)\geq \frac 1 2$. If $j\geq i$ and $y\notin \supp \eta_{j+1}$,  we obtain using the monotonicity of the sequence $(\eta_k)$
\begin{align*}
d(x,y)^2\geq \sum_{k=i}^{j+1} (\eta_k(x)-\eta_k(y))^2\geq \frac{j-i}4.
\end{align*}
Hence if $j>4R^2+i$, then $B_R^d(x)\subset \supp \eta_{j+1}$.

Now, additionally assume that for every $y \in X$ the function $X \to \IR, \, x \mapsto b(x,y)/m(x)$ belongs to $C_0(X)$.

(iii) $\Longrightarrow$ (ii): Let $(\eta_k)$ be a sequence as in the definition of $\chi$-completeness. Since $\Gamma(|\eta_k|) \leq \Gamma(\eta_k)$, we can assume that $\eta_k$ is nonnegative. The pointwise convergence $\eta_k \to 1$,  $0 \leq \eta_k  \leq 1$ and the summability condition (b3) imply  $ \Gamma(\eta_k) \to 0$ pointwise as $ k \to \infty$. Moreover, for $x \in X \setminus {\rm supp}\,\eta_k$ we have
$$ \Gamma(\eta_k)(x) \leq \frac{1}{m(x)} \sum_{y \in {\rm supp}\, \eta_k} b(x,y).$$
Since ${\rm supp}\, \eta_k$ is finite, our additional assumption yields $ \Gamma(\eta_k) \in C_0(X)$. The dual of the Banach space $(C_0(X),\lVert\cdot\rVert_\infty)$ is $\ell^1(X)$. Hence, by Lebesgue's theorem the  pointwise convergence and the uniform boundedness of $(\Gamma(\eta_k))$ yield $ \Gamma(\eta_k)^{1/2} \to 0$ weakly in  $(C_0(X),\lVert\cdot\rVert_\infty)$. According to Mazur's theorem, for every $n \in \IN$ the function $0$ belongs to the strong closure of the  convex hull of $\{ \Gamma(\eta_k)^{1/2} \mid k \geq n\}.$ Hence, inductively we may choose indices $N_1 < N_2 < \ldots$ and coefficients $\lambda_{k,n}, k = N_n + 1, \ldots, N_{n+1}, n \in \IN$, such that the convex combinations
$$\sum_{k = N_n + 1}^{N_{n+1}} \lambda_{k,n}  \Gamma(\eta_k)^{1/2} $$
converge to $0$ strongly in $(C_0(X),\lVert\cdot\rVert_\infty)$, as $n \to \infty$. Then
$$\eta'_n = \sum_{k = N_n + 1}^{N_{n+1}} \lambda_{k,n} \eta_k $$
is finitely supported and converges to $1$ pointwise. Moreover, we chose the convex combinations so that  $(\eta_k)$ being non-decreasing implies that also $(\eta'_n)$ is non-decreasing. The triangle inequality yields
$$\Gamma(\eta'_n)^{1/2}\leq \sum_{k = N_n + 1}^{N_{n+1}} \lambda_{k,n} \Gamma(\eta_k)^{1/2}  \to 0$$
uniformly, as $n \to \infty$. Hence, $(\eta_n')$ is a sequence as in the definition of completeness.
\end{proof}

\begin{remark}
 The condition that for every $y \in X$ the function $X \to \IR, \, x \mapsto b(x,y)/m(x)$ belongs to $C_0(X)$ is trivially satisfied for locally finite graphs, as in this case it even has finite support. This is the situation in which $\chi$-completeness was introduced in \cite{AT15}.  Moreover, by the summability condition (b3) that we impose on $b$ it is satisfied if $m$ is bounded from below by a positive constant.
\end{remark}

\begin{remark}
 For  Riemannian manifolds it is well-known that geodesic completeness is equivalent to the existence of a sequence of compactly supported functions $(\eta_k)$ with $\|\nabla \eta_k\|_\infty \to 0$ and $\eta_k \to 1$ pointwise, see e.g. \cite{BGL} Proposition~C.4.1. Since $\Gamma(f)$  is a discrete version of $|\nabla f|^2$,  the existence of an intrinsic metric with finite distance balls respectively completeness can be seen as an abstract version of geodesic completeness. Indeed, a variant of the above proof also works on manifolds.
\end{remark}

\end{document}